\newtheorem{theorem}{Theorem}[section]
\newtheorem{definition}[theorem]{Definition}
\newtheorem{lemma}[theorem]{Lemma}
\newcommand{\func}[1]{\operatorname{#1}}
\numberwithin{equation}{section}
\title{Blow-up results for a logarithmic pseudo-parabolic $p(.)$-Laplacian type equation} 
\author{Belhaoues Razik\textsuperscript{\,$\ast$}}  
\address{\textsuperscript{$\ast$}\, Department of technical sciences, Laboratory of Pure and Applied Mathematics, University of Laghouat 03000, Algeria.
	\newline \indent \textsuperscript{$\dagger$}\, Chair of Computational Mathematics, DeustoTech, University of Deusto, Avenida de las Universidades 24, 48007 Bilbao, Basque Country, Spain.}
\email{r.belhaoues@lagh-univ.dz}
\thanks{This project has received funding from the European Research Council (ERC) under the European Union's Horizon 2030 research and innovation programme (grant agreement NO: 101096251-CoDeFeL). UB was partially supported by the Grant PID2023-146872OB-I00-DyCMaMod of MICIU (Spain) and by the COST Actions ``CA24122 - multiscale Stochastics, Patterns, and Analysis of Combinatorial Environments'' and ``CA24136 - Interactions between Control Theory and Machine Learning''}
\author{Umberto Biccari\textsuperscript{\,$\dagger$}}
\email{umberto.biccari@deusto.es}
\author{Abita Rahmoune\textsuperscript{\,$\ast$}}
\email{abitarahmoune@yahoo.fr}
\keywords{Pseudo-parabolic equation,  Logarithmic nonlinearity,  Variable nonlinearity, Blow-up time, Bounds of the blow-up time}
\subjclass[2010]{Mathematics Subject Classification 2000: 35B44, 35K55, 35B40}
\begin{document}

\begin{abstract}
In this paper, we consider an initial-boundary value problem for the following mixed pseudoparabolic $p(.)$-Laplacian type equation with logarithmic nonlinearity:
\begin{equation*}
	u_t-\Delta u_t-\func{div}\left(\left\vert \nabla u\right\vert^{p(.)-2}\nabla u\right) =|u|^{q(.)-2}u\ln(|u|), \quad (x,t)\in\Omega\times(0,+\infty),
\end{equation*}
where $\Omega\subset\mathbb{R}^n$ is a bounded and regular domain, and the variable exponents $p(.)$ and $q(.)$ satisfy suitable regularity assumptions. By adapting the first-order differential inequality method, we establish a blow-up criterion for the solutions and obtain an upper bound for the blow-up time. Besides, we show that blow-up may be prevented under appropriate smallness conditions on the initial datum, in which case we also establish decay estimates in the $H_0^1(\Omega)$-norm as $t\to+\infty$. This decay result is illustrated by a two-dimensional numerical example.
\end{abstract}

\maketitle

\section{Introduction}

Let $\Omega\subset\mathbb{R}^n$ be a bounded domain with smooth boundary $\partial\Omega$. In this paper, we look at the following pseudo-parabolic $p(.)$-Laplacian equation with logarithmic nonlinearity:
\begin{equation}\label{eq:main_eq}
	\begin{cases}
		u_t-\Delta u_t-\operatorname{div}\left(\left\vert \nabla u\right\vert^{p(.)-2}\nabla u\right) =|u|^{q(.)-2}u\ln (|u|), & (x,t)\in \Omega\times(0,+\infty)
		\\
		u(x,t)=0, &(x,t)\in \partial\Omega\times(0,+\infty)
		\\
		u(x,0)=u_0(x)\neq 0, & x\in \Omega
	\end{cases}
\end{equation}
In \eqref{eq:main_eq}, $p(.)$ and $q(.)$ are given measurable functions on $\overline{\Omega}$ satisfying
\begin{align*}
	2\leq p_1\leq p(x) \leq p_2<q_1\leq q(x) \leq q_2 < p^{\ast }(x),
\end{align*}
where we have denoted
\begin{displaymath}
	\begin{array}{ll}
		p_1:=\underset{x\in\Omega}{\mbox{ess inf}}\,p(x), & \quad p_2=\underset{x\in\Omega}{\mbox{ess sup}}\,p(x)
		\\[15pt]
		q_1:=\underset{x\in\Omega}{\mbox{ess inf}}\,q(x), & \quad q_2=\underset{x\in\Omega}{\mbox{ess sup}}\,q(x).
	\end{array}
\end{displaymath}
and
\begin{align}\label{eq:pAst}
	p^{\ast}(x):= \begin{cases} \displaystyle\frac{np(x)}{n-p(x)}, & \mbox{ if } n>p_2, \\ +\infty, & \mbox{ if } n\leq p_2 \end{cases}.
\end{align}

Moreover, we assume that $p(.)$ and $q(.)$ satisfy the following Zhikov-Fan uniform local continuity condition: there exist a constant $M>0$ such that
\begin{equation}\label{03}
	\left\vert q(x)-q(y) \right\vert \leq \frac{M}{\left\vert \log \left\vert x-y\right\vert \right\vert }, \quad\mbox{ for all } (x,y)\in\Omega\times\Omega\;\mbox{ with }\left\vert x-y\right\vert <\frac 12.
\end{equation}

Finally, the initial datum in \eqref{eq:main_eq} is assumed to be any given function $u_0\in W_0^{1,p(.)}(\Omega)$, where $W_0^{1,p(.)}(\Omega)$ denotes the generalization to the variable exponent case of the Sobolev space $W_0^{1,p}(\Omega)$. We refer to Section \ref{sec:2} for more detail.

The nonlinear term $\func{div}\left(\left\vert \nabla u\right\vert^{p(.)-2}\nabla u\right)$ in \eqref{eq:main_eq} is the so-called $p(.)$-Laplacian operator, which is sometimes symbolized by
\begin{align*}
	\Delta_{p(.)}u=\mbox{div}\left(\left\vert \nabla u\right\vert^{p(.)-2}\nabla u\right)
\end{align*}
and is a generalization of the well-known $p$-Laplacian, that corresponds to taking $p(.)\equiv p\in\mathbb{R}$ constant.

This operator can be prolonged to a monotone operator between the space $W_0^{1,p(.)}(\Omega)$ and its dual space $W^{-1,p^{\prime}(.)}(\Omega)$ as follows
\begin{equation*}
	\begin{cases}
		-\Delta_{p(.)}u:W_0^{1,p(.)}(\Omega)\rightarrow W^{-1,p^{\prime}(.)}(\Omega)
		\\[7pt]
		\displaystyle\langle -\Delta_{p(.)}u,\phi \rangle_{p(.)} = \int_{\Omega}\left\vert \nabla u\right\vert^{p(x)-2}\nabla u\nabla\phi\,\mathrm{d}x,\quad 2\leq p_1\leq p(x)\leq p_2<+\infty,
	\end{cases}
\end{equation*}
where $\langle.,.\rangle_{p(.)}$ indicates the duality pairing between $W_0^{1,p(.)}(\Omega)$ and $W^{-1,p^{\prime }(.)}(\Omega)$, and $p'(.)$ denotes the conjugate exponent such that
\begin{align*}
	\frac{1}{p(x)}+\frac{1}{p^{\prime }(x)}=1.
\end{align*}

Nonlinear pseudo-parabolic equations like \eqref{eq:main_eq} arise in the description of several problems in hydrodynamics, thermodynamics, nonlinear elasticity, image processing and filtration theory (see \cite{Korpusov1,Dzektser,Alaoui,Korpusov2,Korpusov3}). Furthermore, we can mention interesting applications in the study of electrorheological fluids, whose viscosity depends on the electric field in the fluid itself (see \cite{Acerbi4,Diening4,Diening3,Diening1,Halsey,Ruzicka4}).

In the case in which the exponents $p(.)$ and $q(.)$ are constants, that is when $p(.)\equiv p\in\mathbb R$ and $q(.)\equiv q\in\mathbb R$, the corresponding PDE
\begin{equation}\label{eq:main_eqConst}
	u_t-\Delta u_t-\func{div}\left(\left\vert \nabla u\right\vert^{p-2}\nabla u\right) =|u|^{q-2}u\ln (|u|),\quad (x,t)\in \Omega\times(0,+\infty)
\end{equation}
has been largely studied by the mathematical community.

In \cite{Showalter,Ting}, the linear version of \eqref{eq:main_eqConst} (i.e. with the right-hand side equal to zero) has been considered in the case $p=2$ in which the $p$-Laplacian $-\Delta_p$ becomes the standard Laplace operator $-\Delta$. In particular, results of existence, uniqueness and regularity of the solutions were obtained.

Later on, these results have been extended to the general case of \eqref{eq:main_eqConst}. In several papers, the asymptotic behavior of weak solutions to \eqref{eq:main_eqConst} with initial datum $u_0\in W^{1,p}_0(\Omega)$ has been studied. In more detail,
\begin{itemize}
	\item The case $p=q>2$ has been considered in \cite{Nhan}.
	\item The case $1<p=q<2$ has been considered in \cite{YANGCAO}.
	\item The case $2<p<q<p(1+\frac{2}{n})$ has been considered in \cite{Wang2018}.
	\item The case $1<p\leq q<p^\ast$, with $p^\ast$ as in \eqref{eq:pAst}, has been considered in \cite{Ding}.
\end{itemize}

In all the aforementioned references, the global-in-time existence or the finite time blow-up of such solutions has been characterized in terms of the values of the exponents $p$ and $q$. We shall stress, however, that the methodology employed in \cite{Wang2018,Nhan,YANGCAO} to prove the blow-up of weak solutions to \eqref{eq:main_eqConst} does not allow the authors to address the relevant issue of estimating the blow-up time and rate (we refer to  \cite{Liu,Payne2010,Payne10,Payne2006,Payne2007,Song} for the analysis of these issues in the case of other interesting non-linear evolution equations).
A partial answer to these relevant questions has been recently given in \cite{Dai} where, under the same conditions as in \cite{Wang2018} on the exponents $p,q\in\mathbb R$ (that is, $2<p<q<p(1+\frac{2}{n})$), explicit estimates for the blow-up time are provided.

Finally, we refer to \cite{Ball,Cao,Chen2015,Chen15,Dai,Di,Gopala,Yin,Khomrutai,Le2017,Peng,Zhu} for the study of the asymptotic behavior of weak solutions to \eqref{eq:main_eqConst} under high initial energy level conditions.

As for the case of variable exponents $p(.)$ and $q(.)$, to the best of our knowledge, there are no results in the existing literature in the spirit of those that we have just recalled. As a matter of fact, many of the techniques employed in the previous references to deal with \eqref{eq:main_eqConst} become unsuccessful to analyze \eqref{eq:main_eq} when $p(.)$ and $q(.)$ are measurable functions on $\overline{\Omega}$.

The goal of this paper is precisely to address this variable measurable coefficients framework. In particular, we shall identify sufficient conditions on $p(.)$, $q(.)$ and the initial datum $u_0$ for which the blow-up and the non-blow-up phenomena of solutions to \eqref{eq:main_eq} appear, also providing estimates for the blow-up time.

At this regard, we shall mention that this kind of results are already available for the hyperbolic version of \eqref{eq:main_eq}. The interested reader may refer for instance to \cite{Aboulaich,Amorim2013,Antontsev1,Antontsev2011,Antontsev7,Antontsev5,ChenLevine,Lian,Rahmoune2018,Rahmoune2019,SUN2016}. In particular, in \cite{Amorim2013,Antontsev2011}, the authors have discussed the Dirichlet problem for following equation
\begin{align*}
	&u_{tt}=\func{div}\left(a(x,t)|\nabla u|^{p(x,t)-2}\nabla u\right) +\alpha \Delta u_t+b(x,t)|u|^{\sigma(x,t)-2}u + f(x,t), \quad (x,t)\in\Omega\times(0,+\infty)
\end{align*}
with negative initial energy and, under suitable conditions on the functions $a,b,f,p,\sigma$, they have used Galerkin and energy methods to establish local existence, global existence and blow-up of the solutions.

The present paper is organized as follows: in Section \ref{sec:2}, we will introduce some preliminary concepts and notations that will be of use in our further analysis. Secondly, we will study the finite-time blow-up of solutions in Section \ref{sec:3}. The global existence of weak solutions to \eqref{eq:main_eq} is addressed in Section \ref{sec:4}, where we also we also establish decay estimates in the $H_0^1(\Omega)$-norm as $t\to+\infty$. Finally, in Section \ref{sec:5}, this decay result is illustrated by a two-dimensional numerical example.

\section{Preliminaries}\label{sec:2}

In this section, we present some preliminary concepts and notations that we shall employ in our further analysis. Let us start by introducing the variable-order Lebesque space $L^{p(.)}(\Omega)$, which is defined for all $p:\Omega \rightarrow [1,+\infty]$ a measurable function as
\begin{equation*}
	L^{p(.)}(\Omega):=\left\{u:\Omega\to\mathbb R \mbox{ measurable }: \int_{\Omega}\left\vert u(x)\right\vert^{p(x)}\,\mathrm{d}x<+\infty\right\}.
\end{equation*}
We then know that $L^{p(.)}(\Omega)$ is a Banach space, equipped with the Luxemburg-type norm
\begin{equation*}
	\left\Vert u\right\Vert_{p(.)}:= \inf \left\{\lambda >0\text{, \ } \int_{\Omega}\left\vert \frac{u(x)}{\lambda}\right\vert^{p(x)}\mathrm{d}x\leq 1\right\}.
\end{equation*}
Next, we define the variable-order Sobolev space $W^{1,p(.)}(\Omega)$ as
\begin{align*}
	W^{1,p(.)}(\Omega):= \left\{u\in L^{p(.)}(\Omega)\;:\; \nabla u \in L^{p(.)}(\Omega) \right\},
\end{align*}
equipped with the norm
\begin{align}\label{eq:SobolevNorm}
	\|u\|_{W^{1,p(.)}(\Omega)} = \|u\|_{p(.)}^2 + \|\nabla u\|_{p(.)}^2.
\end{align}
Moreover, in what follows we will need the following embedding result from \cite{Diening3,Fan}.
\begin{lemma}\label{lem:embedding}
Let $\Omega\subset\mathbb R^n$ be a bounded regular domain. It holds the following.
\begin{itemize}
	\item[1.] If $p\in C(\overline{\Omega})$ and $q:\Omega \rightarrow [1,+\infty)$ is a measurable function such that
	\begin{equation*}
		\underset{x\in\Omega}{\mbox{ess inf}}\,\big(p^{\ast}(x)-q(x)\big)>0,
	\end{equation*}
	with $p^{\ast}$ defined as in \eqref{eq:pAst}, then $W_0^{1,p(.)}(\Omega)\hookrightarrow L^{q(.)}(\Omega)$ with continuous and compact embedding.
	\item[2.] If $p$ satisfy \eqref{03}, then $\Vert u\Vert_{p(.)}\leq C\Vert \nabla u\Vert_{p(.)}$ for all $u\in W_0^{1,p(.)}(\Omega)$. In particular, $\Vert u\Vert_{1,p(.)}=\Vert \nabla u\Vert_{p(.)}$ defines a norm on $W_0^{1,p(.)}(\Omega)$ which is equivalent to \eqref{eq:SobolevNorm}.
\end{itemize}
\end{lemma}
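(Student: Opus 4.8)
The statement to prove is Lemma~\ref{lem:embedding}, which collects two standard results about variable-exponent Sobolev spaces: a Sobolev embedding $W_0^{1,p(.)}(\Omega)\hookrightarrow L^{q(.)}(\Omega)$ under a gap condition on the exponents, and a Poincaré-type inequality under the log-Hölder condition \eqref{03}. Since the lemma is explicitly cited from \cite{Diening3,Fan}, the ``proof'' I would give is really a guided reduction to the known scalar theory; I will sketch it honestly rather than reproducing the monograph.

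\begin{proof}[Proof sketch]
Both assertions are classical in the theory of generalized Lebesgue--Sobolev spaces, and we only indicate the main steps, referring to \cite{Diening3,Fan} for the full arguments.

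\emph{Part 1.} The plan is to first treat the subcritical embedding pointwise in the exponent and then globalize. By the hypothesis $\func{ess\,inf}_{x\in\Omega}(p^\ast(x)-q(x))=:\varepsilon_0>0$, one has $q(x)\le p^\ast(x)-\varepsilon_0$ a.e., so it suffices to prove $W_0^{1,p(.)}(\Omega)\hookrightarrow L^{q(.)}(\Omega)$ for this ``worst'' exponent and then use the monotone inclusion $L^{r(.)}(\Omega)\hookrightarrow L^{s(.)}(\Omega)$ valid for $s(.)\le r(.)$ on a bounded domain. The continuous embedding is obtained locally: cover $\overline\Omega$ by finitely many balls on which $p$ oscillates by less than a prescribed amount (using $p\in C(\overline\Omega)$ and uniform continuity), on each ball compare $p(.)$ with the constant exponents given by its local infimum and supremum, apply the classical constant-exponent Sobolev inequality, and patch together with a partition of unity; the modular--norm equivalence for variable Lebesgue spaces then converts the modular estimates into the norm estimate $\|u\|_{q(.)}\le C\|u\|_{W^{1,p(.)}}$. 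For compactness, one combines this continuous embedding into $L^{q(.)}(\Omega)$ with a genuinely compact embedding into a space with a slightly smaller exponent: since $q(x)<p^\ast(x)-\varepsilon_0$, pick $\tilde q$ with $q(x)<\tilde q(x)\le p^\ast(x)-\varepsilon_0/2$, note $W_0^{1,p(.)}(\Omega)\hookrightarrow\hookrightarrow L^{p(.)}(\Omega)$ (Rellich-type, via the local argument again) and interpolate, or invoke directly the compactness statement in \cite{Fan}. The main technical obstacle here is the localization/patching step, because the Luxemburg norm is not additive over a partition of the domain; this is exactly what the log-type continuity of $p$ and the norm--modular equivalence are designed to handle.

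\emph{Part 2.} Here the key point is that the log-Hölder condition \eqref{03} on $p$ is precisely what guarantees that the Hardy--Littlewood maximal operator is bounded on $L^{p(.)}(\Omega)$, and Poincaré's inequality $\|u\|_{p(.)}\le C\|\nabla u\|_{p(.)}$ for $u\in W_0^{1,p(.)}(\Omega)$ follows from this by the standard potential-estimate argument: extend $u$ by zero outside $\Omega$, write $|u(x)|\lesssim I_1(|\nabla u|)(x)$ via the fundamental-theorem-of-calculus representation, and use that $I_1$ maps $L^{p(.)}(\Omega)$ boundedly into $L^{p(.)}(\Omega)$ (for $\Omega$ bounded) as a consequence of the maximal function bound. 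Without \eqref{03} this can fail, so the hypothesis is essential. Granting the inequality $\|u\|_{p(.)}\le C\|\nabla u\|_{p(.)}$, the equivalence of $\|u\|_{1,p(.)}:=\|\nabla u\|_{p(.)}$ with the full Sobolev norm \eqref{eq:SobolevNorm} is immediate: on the one hand $\|\nabla u\|_{p(.)}\le \|u\|_{W^{1,p(.)}}$ trivially, and on the other $\|u\|_{p(.)}^2+\|\nabla u\|_{p(.)}^2\le (1+C^2)\|\nabla u\|_{p(.)}^2$, which gives the two-sided comparison with constants depending only on $\Omega$, $n$ and the log-Hölder constant $M$ in \eqref{03}. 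For the detailed proofs of the maximal-function boundedness and the resulting Sobolev--Poincaré inequalities in the variable-exponent setting, we again refer the reader to \cite{Diening3,Fan}.
\end{proof}
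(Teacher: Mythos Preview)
The paper does not prove this lemma at all: it is simply stated as a quotation of known results, with the sentence ``Moreover, in what follows we will need the following embedding result from \cite{Diening3,Fan}'' and no proof environment. Your sketch is therefore already more than what the paper offers; it is a reasonable outline of the standard variable-exponent theory and correctly identifies the role of the log-H\"older condition \eqref{03}, so there is nothing to compare against and nothing to correct.
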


Let us now introduce our notion of weak solution to \eqref{eq:main_eq}, and give a first result of local-in-time existence and uniqueness.
\begin{definition}
Let $u_0\in W_0^{1,p(.)}(\Omega)$ and $T>0$. A function
\begin{equation*}
	u\in L^{\infty}\left(0,T;W_0^{1,p(.)}(\Omega)\right) \text{ with } u_t\in L^{2}\left(0,T;H_0^1(\Omega)\right),
\end{equation*}
is called a weak solution to \eqref{eq:main_eq} if the identity
\begin{equation*}
	\left\langle u_t,v\right\rangle +\left\langle \nabla u_t,\nabla v\right\rangle +\left\langle |\nabla u|^{p(.)-2}\nabla u,\nabla v\right\rangle = \int_{\Omega}|u|^{q(.)-2}u\ln (|u|)v\,\mathrm{d}x
\end{equation*}
holds for any $v\in W_0^{1,p(.)}(\Omega)$, and for a.e. $t\in [0,T]$.
\end{definition}

\begin{theorem}
For all $u_0\in W_0^{1,p(.)}(\Omega)$, there exists $T_0>0$ such that the problem \eqref{eq:main_eq} admits a unique weak solution $u$ on $[0,T_0]$.
\end{theorem}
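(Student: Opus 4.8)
The plan is to establish local existence and uniqueness by the standard Galerkin approximation scheme combined with a fixed-point argument to handle the nonlinearity, adapting the classical approach for pseudo-parabolic equations to the variable exponent setting. First I would linearize the problem: for a given $w$ in a suitable ball of $C\big([0,T];W_0^{1,p(.)}(\Omega)\big)$, consider the problem $u_t-\Delta u_t-\operatorname{div}(|\nabla w|^{p(.)-2}\nabla w)=|w|^{q(.)-2}w\ln(|w|)$ with the same initial and boundary data, so that the right-hand side and the $p(.)$-Laplacian term become known forcing terms. The operator $I-\Delta$ appearing on the left acting on $u_t$ is, on $H_0^1(\Omega)$, a coercive symmetric isomorphism onto $H^{-1}(\Omega)$; this is the key structural feature of pseudo-parabolic equations that makes the problem essentially an ODE in a Hilbert space. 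Writing $Au_t=(I-\Delta)u_t$, the linearized equation becomes $u_t=A^{-1}F(w)$ with $u(0)=u_0$, which can be solved by Galerkin projection onto the eigenfunctions of $-\Delta$ (or, more directly in the variable setting, one carries out Galerkin on the full nonlinear problem).

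The key steps, in order, would be: (i) set up the Galerkin approximations $u_m(t)=\sum_{k=1}^m g_{k,m}(t)e_k$ where $\{e_k\}$ is the $H_0^1$-orthonormal basis of Dirichlet Laplacian eigenfunctions, obtaining a system of ODEs for the coefficients whose local solvability on some interval $[0,T_m]$ follows from the Cauchy–Lipschitz theorem once one checks that $s\mapsto |s|^{q(.)-2}s\ln(|s|)$ and $\xi\mapsto |\xi|^{p(.)-2}\xi$ are locally Lipschitz in the relevant sense (here the growth restriction $q_2<p^{\ast}(x)$ and the embedding in part 1 of Lemma \ref{lem:embedding} are used to control the nonlinear term in $W^{-1,p'(.)}(\Omega)$ or $H^{-1}(\Omega)$); (ii) derive a priori estimates uniform in $m$ by testing the approximate equation with $u_m$ and with $(u_m)_t$: testing with $u_m$ controls $\|u_m\|_{H_0^1}^2$ and $\int_0^t\|\nabla u_m\|_{p(.)}^{p(.)}$ via a Gronwall argument, using that the logarithmic nonlinearity satisfies $|s|^{q(x)-2}s\ln(|s|)|\leq C(|s|^{q(x)-1+\delta}+|s|^{q(x)-1-\delta})$ for small $\delta>0$ together with the embedding; testing with $(u_m)_t$ gives a uniform bound on $(u_m)_t$ in $L^2(0,T;H_0^1(\Omega))$; (iii) pass to the limit using the Aubin–Lions–Simon compactness lemma to extract a subsequence converging strongly in $C([0,T];L^{q(.)}(\Omega))$ (so that the nonlinear terms converge) and weakly-$\ast$ in the remaining spaces, identifying the limit of the monotone $p(.)$-Laplacian term by Minty's monotonicity trick; (iv) verify the initial condition via the continuity $u\in C([0,T];H_0^1(\Omega))$ obtained from $u,u_t\in L^2(0,T;H_0^1(\Omega))$; (v) prove uniqueness by taking the difference of two solutions $u^1,u^2$, testing with $u^1-u^2$, and estimating the difference of nonlinear terms: the $p(.)$-Laplacian difference has a good sign by monotonicity, while the logarithmic term is locally Lipschitz on bounded sets, so Gronwall closes the argument on $[0,T_0]$.

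The main obstacle I expect is handling the logarithmic nonlinearity $|u|^{q(.)-2}u\ln(|u|)$ in the variable exponent framework: near $u=0$ the factor $\ln(|u|)$ is unbounded, and for large $|u|$ one must absorb it into a slightly larger power. The standard device is the inequality that for every $\varepsilon>0$ there is $C_\varepsilon>0$ with $\big||s|^{q(x)-2}s\ln(|s|)\big|\le C_\varepsilon\big(|s|^{q(x)-1-\varepsilon}+|s|^{q(x)-1+\varepsilon}\big)$, valid uniformly in $x$ since $q_1\le q(x)\le q_2$; choosing $\varepsilon$ small enough that $q_2+\varepsilon<p^{\ast}(x)$ essential-infimum-wise, the embedding $W_0^{1,p(.)}(\Omega)\hookrightarrow L^{q_2+\varepsilon}(\Omega)$ from part 1 of Lemma \ref{lem:embedding} makes the nonlinear term well-defined and continuous on the solution space, and provides the local Lipschitz bounds needed both for the ODE solvability and for uniqueness. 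A secondary technical point is the interplay between the $L^{p(.)}$-modular and the norm (since $\|\nabla u\|_{p(.)}$ and $\int|\nabla u|^{p(.)}$ are only comparable up to powers depending on whether the norm is $\le 1$ or $\ge 1$); this is routine but must be tracked carefully in the a priori estimates. The convergence of the $p(.)$-Laplacian term is standard once monotonicity is invoked, so I would treat it briefly.
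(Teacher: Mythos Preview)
Your proposal is correct and follows essentially the same Faedo--Galerkin approach that the paper indicates; in fact the paper's own proof is a single sentence citing the relevant literature (\cite{Akagi,Diening1,Fu,HuaWang,Antonsev20,YANGCAO}) and leaving the details to the reader. Your outline supplies precisely those details---Galerkin approximation, a priori estimates via testing with $u_m$ and $(u_m)_t$, compactness, Minty's trick for the $p(.)$-Laplacian, and uniqueness by monotonicity plus Gronwall---and correctly identifies the main technical point (controlling the logarithmic nonlinearity via the $\varepsilon$-power trick combined with the embedding of Lemma~\ref{lem:embedding}).
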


\begin{proof}
The result can be easily obtained by using the Faedo-Galerkin approach and combining the ideas from \cite{Akagi,Diening1,Fu,HuaWang} with the ones from \cite{Antonsev20} (see also \cite{YANGCAO}). We leave the details to the reader.
\end{proof}

\noindent Let now $\sigma>0$ be a positive constant satisfying
\begin{equation}\label{eq:sigma}
	0<\sigma < \begin{cases} p^\ast-q_2, & \text{ if } p(.) <n, \\ +\infty, & \text{ if }p(.) \geq n. \end{cases}
\end{equation}

Then, from Lemma \ref{lem:embedding} we have that $W_0^{1,q(.)+\sigma}(\Omega)\hookrightarrow L^{q(.)+\sigma}(\Omega)$, and there exists a positive constant $B_{\sigma}>0$ such that
\begin{equation}\label{eq:Bsigma}
	\Vert u\Vert_{q(.) +\sigma}\leq B_{\sigma}\Vert \nabla u\Vert_{q(.)+\sigma}.
\end{equation}
Finally, let us indicate with $\alpha_1$, $B_1$ and $E_1$ the following positive constants:
\begin{subequations}
	\begin{align}
		& \alpha_1 = \left(\frac{e\sigma q_1}{q_1+\sigma}B_1^{-\left(q_1+\sigma\right)}\right)^{\frac{p_2}{q_1-p_2+\sigma}}\label{eq:alpha1}
		\\[7pt]
		& B_1 = \max \left(1,B_{\sigma }\right)\label{eq:B1}
		\\[7pt]
		& E_1=\left(\frac{1}{p_2}-\frac{1}{q_1+\sigma}\right)\alpha_1.\label{eq:E1}
	\end{align}
\end{subequations}

\section{Finite-time blow-up of solutions}\label{sec:3}

In this section, we obtain a blow-up criterion for the solutions to \eqref{eq:main_eq}, also providing an upper bound for the blow-up time. To this end, let us introduce the energy associated with the solution of our problem \eqref{eq:main_eq}, which is defined as follows:
\begin{align}\label{eq:energy}
	E(t):= \int_{\Omega}\frac{1}{p(.)}\left\vert\nabla u \right\vert^{p(.)}\,\mathrm{d}x - \int_{\Omega}\frac{1}{q(.)}\left\vert u\right\vert^{q(.)}\ln (|u|)\,\mathrm{d}x +\int_{\Omega}\frac{1}{q^2(.)}|u|^{q(.)}\,\mathrm{d}x
\end{align}
We have the following result providing upper and lower bounds for $E(t)$.

\begin{lemma}\label{lem:energy_est}
Let $u_0\in W^{1,p(.)}(\Omega)$ and $E(t)$ be the energy associated with the corresponding solution of \eqref{eq:main_eq}, as defined in \eqref{eq:energy}. Let $g:[0,+\infty)\to\mathbb{R}$ be given by
\begin{align}\label{eq:g}
	g(\xi):=\frac{1}{p_2}\min \left(\xi^{\frac{p_1}{p_2}},\xi \right) -\frac{1}{e\sigma q_1}\max \left(B_1^{q_2+\sigma}\xi^{\frac{q_2+\sigma}{p_2}},B_1^{q_1+\sigma}\xi^{\frac{q_1+\sigma}{p_2}}\right),
\end{align}
where $\sigma$ and $B_1$ are the constants defined in \eqref{eq:sigma} and \eqref{eq:B1}, respectively. Let $\alpha:[0,+\infty)\to [0,+\infty)$ be defined as
\begin{align}\label{eq:alpha}
	\alpha(t) =\left\Vert \nabla u(t)\right\Vert_{p(.)}^{p_2}.
\end{align}
Then, we have
\begin{align}\label{eq:energy_est}
	g(\alpha)\leq E(t)\leq E(0).
\end{align}
\end{lemma}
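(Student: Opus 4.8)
The plan is to prove the two inequalities in \eqref{eq:energy_est} separately. The upper bound $E(t)\leq E(0)$ should follow from a standard energy dissipation identity: testing the equation in \eqref{eq:main_eq} with $u_t$ (which is admissible since $u_t\in L^2(0,T;H_0^1(\Omega))$), one obtains
\begin{equation*}
	\frac{d}{dt}E(t) = -\|u_t\|_2^2 - \|\nabla u_t\|_2^2 \leq 0,
\end{equation*}
after recognizing that $\int_\Omega |u|^{q(.)-2}u\ln(|u|)u_t\,\mathrm{d}x = \frac{d}{dt}\left(\int_\Omega \frac{1}{q(.)}|u|^{q(.)}\ln(|u|)\,\mathrm{d}x - \int_\Omega \frac{1}{q^2(.)}|u|^{q(.)}\,\mathrm{d}x\right)$ and $\int_\Omega |\nabla u|^{p(.)-2}\nabla u\cdot\nabla u_t\,\mathrm{d}x = \frac{d}{dt}\int_\Omega \frac{1}{p(.)}|\nabla u|^{p(.)}\,\mathrm{d}x$. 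Integrating in time from $0$ to $t$ gives $E(t)\leq E(0)$ immediately. I would be careful to note that the differentiation under the integral sign with the variable exponent $q(x)$ is justified by the regularity of $u$ and the pointwise identity $\frac{d}{ds}\left(\frac{1}{q}|s|^q\ln|s| - \frac{1}{q^2}|s|^q\right) = |s|^{q-2}s\ln|s|$.

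For the lower bound $g(\alpha(t))\leq E(t)$, the idea is to estimate from below the three terms defining $E(t)$ in \eqref{eq:energy}. The first term $\int_\Omega \frac{1}{p(.)}|\nabla u|^{p(.)}\,\mathrm{d}x$ is bounded below by $\frac{1}{p_2}\int_\Omega |\nabla u|^{p(.)}\,\mathrm{d}x \geq \frac{1}{p_2}\min(\alpha^{p_1/p_2},\alpha)$, using the standard modular–norm comparison for variable exponent spaces (the modular $\int_\Omega|\nabla u|^{p(.)}\,\mathrm{d}x$ is bounded below by $\min(\|\nabla u\|_{p(.)}^{p_1}, \|\nabla u\|_{p(.)}^{p_2})$, and then substituting $\alpha = \|\nabla u\|_{p(.)}^{p_2}$). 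The third term $\int_\Omega \frac{1}{q^2(.)}|u|^{q(.)}\,\mathrm{d}x$ is nonnegative, so it can simply be dropped. The negative logarithmic term is the one requiring work: using the elementary inequality $\ln s \leq \frac{1}{e\sigma}s^\sigma$ for $s>0$ (so that $|u|^{q(.)}\ln|u| \leq \frac{1}{e\sigma}|u|^{q(.)+\sigma}$ on the set where $|u|\geq 1$, while on $\{|u|<1\}$ the integrand $|u|^{q(.)}\ln|u|$ is negative and contributes favorably), one gets
\begin{equation*}
	\int_\Omega \frac{1}{q(.)}|u|^{q(.)}\ln(|u|)\,\mathrm{d}x \leq \frac{1}{e\sigma q_1}\int_\Omega |u|^{q(.)+\sigma}\,\mathrm{d}x.
\end{equation*}
Then the modular $\int_\Omega|u|^{q(.)+\sigma}\,\mathrm{d}x$ is controlled above by $\max(\|u\|_{q(.)+\sigma}^{q_1+\sigma}, \|u\|_{q(.)+\sigma}^{q_2+\sigma})$, and invoking the Sobolev embedding \eqref{eq:Bsigma} together with $\|\nabla u\|_{q(.)+\sigma} \leq $ a constant times $\|\nabla u\|_{p(.)}$ (or more directly absorbing constants into $B_1 = \max(1,B_\sigma)$ from \eqref{eq:B1}), this becomes $\max(B_1^{q_2+\sigma}\alpha^{(q_2+\sigma)/p_2}, B_1^{q_1+\sigma}\alpha^{(q_1+\sigma)/p_2})$. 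Assembling these three estimates yields exactly $E(t)\geq g(\alpha(t))$.

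The main obstacle I anticipate is the bookkeeping in the lower-bound argument: one must correctly split $\Omega$ into the regions $\{|u|\geq 1\}$ and $\{|u|<1\}$ when handling $|u|^{q(.)}\ln|u|$, keep track of which way each modular–norm inequality goes depending on whether the norm exceeds $1$ (hence the $\min$ and $\max$ in \eqref{eq:g}), and ensure that the embedding constant from $L^{q(.)+\sigma}$ and the comparison between the $\|\nabla u\|_{q(.)+\sigma}$ and $\|\nabla u\|_{p(.)}$ norms are consistently bundled into $B_1$. The exponent $\sigma$ must be chosen as in \eqref{eq:sigma} precisely so that the embedding $W_0^{1,p(.)}(\Omega)\hookrightarrow L^{q(.)+\sigma}(\Omega)$ is valid via Lemma \ref{lem:embedding}. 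Once the signs and the min/max branches are pinned down, the rest is a routine chain of variable-exponent inequalities, and no delicate estimate beyond $\ln s \leq \frac{1}{e\sigma}s^\sigma$ is needed.
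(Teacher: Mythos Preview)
Your proposal is correct and follows essentially the same approach as the paper: the upper bound via the energy dissipation identity $E'(t)=-\|u_t\|_2^2-\|\nabla u_t\|_2^2\leq 0$, and the lower bound via the elementary inequality $\ln\xi\leq\frac{1}{e\sigma}\xi^\sigma$ combined with the modular--norm comparisons and the Sobolev embedding $W_0^{1,p(.)}(\Omega)\hookrightarrow L^{q(.)+\sigma}(\Omega)$, exactly as the paper does. Your splitting of $\Omega$ into $\{|u|\geq 1\}$ and $\{|u|<1\}$ when handling the logarithmic term is in fact slightly more careful than the paper's intermediate step $\int_\Omega\frac{1}{q(.)}|u|^{q(.)}\ln|u|\,\mathrm{d}x\leq\frac{1}{q_1}\int_\Omega|u|^{q(.)}\ln|u|\,\mathrm{d}x$, which is not pointwise valid on $\{|u|<1\}$; the conclusion is nonetheless the same once one applies $\ln|u|\leq\frac{1}{e\sigma}|u|^\sigma$ first and then replaces $1/q(.)$ by $1/q_1$ on the resulting nonnegative integrand.
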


\begin{proof}
First of all, a direct computation yields that
\begin{align}\label{eq:energyDer}
	\frac{dE(t)}{dt}=-\int_{\Omega}u_t^2\,\mathrm{d}x-\int_{\Omega}\left\vert \nabla u_t\right\vert^2 \,\mathrm{d}x\leq 0.
\end{align}

Hence, $E$ is decreasing with respect to $t$, and the inequality $E(t)\leq E(0)$ immediately holds for all $t\geq 0$. As for the lower estimate in \eqref{eq:energy_est}, we have that
\begin{align}\label{eq:energy_est_part}
	\int_{\Omega}\frac{1}{q(.)}\left\vert u \right\vert^{q(.)}\ln(|u|)\,\mathrm{d}x \leq \frac{1}{q_1}\int_\Omega\left\vert u\right\vert^{q(.)}\ln(|u|)\,\mathrm{d}x \leq \frac{1}{e\sigma q_1}\int_{\Omega}\left\vert u\right\vert^{q(.) +\sigma}\,\mathrm{d}x,
\end{align}
where we used that
\begin{align}\label{eq:estLog}
	\ln(\xi)\leq\frac{1}{e\sigma}\xi^{\sigma}, \quad\mbox{ for all } \xi, \sigma>0.
\end{align}
Therefore, we get from \eqref{eq:energy}, \eqref{eq:alpha}, \eqref{eq:g} and \eqref{eq:energy_est_part} that
\begin{align}\label{eq:est_g_energy}
	E(t) \geq &\, \frac{1}{p_2}\min \left(\left\Vert \nabla u\right\Vert_{p(.)}^{p_1},\left\Vert\nabla u\right\Vert_{p(.)}^{p_2}\right)-\frac{1}{e\sigma
		q_1}\max \left(\left\Vert u\right\Vert_{q(.)+\sigma}^{q_2+\sigma},\left\Vert u\right\Vert_{q(.) +\sigma}^{q_1+\sigma}\right) 
	\\
	\geq &\, \frac{1}{p_2}\min \left(\left\Vert\nabla u\right\Vert_{p(.)}^{p_1},\left\Vert \nabla u\right\Vert_{p(.)}^{p_2}\right) -\frac{1}{e\sigma q_1}\max \left(\left(B_1\left\Vert \nabla u\right\Vert_{p(.)}\right)^{q_2+\sigma},\left(B_1\left\Vert \nabla u\right\Vert_{p(.)}\right)^{q_1+\sigma}\right) = g(\alpha). \notag
\end{align}
This concludes the proof of \eqref{eq:energy_est}.
\end{proof}
\noindent Next, we state some technical lemmas which will be needed in the proof of our main result.
\begin{lemma}\label{lem:lemma_h}
Let $\alpha_1$, $B_1$ and $E_1$ be given as in \eqref{eq:alpha1}, \eqref{eq:B1} and \eqref{eq:E1}, respectively. Define the function $h:[0,+\infty)\rightarrow \mathbb{R}$ as
\begin{equation}\label{eq:h}
	h(\xi) =\frac{1}{p_2}\xi -\frac{1}{e\sigma q_1}B_1^{q_1+\sigma}\xi^{\frac{q_1+\sigma}{p_2}}.
\end{equation}
Then, the following assertions hold:
\begin{itemize}
	\item[1.] $h$ is increasing for $0<\xi \leq \alpha_1$ and decreasing for $\xi \geq \alpha_1$.
		
	\item[2.] $h(\alpha_1) = E_1$.
	
	\item[3.] $\displaystyle\lim_{\xi\rightarrow +\infty}h(\xi) =-\infty$.
\end{itemize}
\end{lemma}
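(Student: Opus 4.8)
The plan is to establish all three assertions by elementary one–variable calculus, using that the standing assumption $2\le p_1\le p_2<q_1$ together with $\sigma>0$ forces $q_1+\sigma-p_2>0$, so that the exponent $\frac{q_1+\sigma}{p_2}$ appearing in \eqref{eq:h} is strictly larger than $1$. Thus $h$ is the difference of a linear term and a strictly convex superlinear power, and its qualitative behaviour is completely determined by a single sign change of $h'$.

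First, for assertion 1 I would differentiate \eqref{eq:h} to obtain
\[
	h'(\xi)=\frac{1}{p_2}-\frac{q_1+\sigma}{e\sigma q_1 p_2}\,B_1^{q_1+\sigma}\,\xi^{\frac{q_1+\sigma-p_2}{p_2}}.
\]
Since the exponent $\frac{q_1+\sigma-p_2}{p_2}$ is strictly positive, the map $\xi\mapsto\xi^{(q_1+\sigma-p_2)/p_2}$ is strictly increasing on $[0,+\infty)$, hence $h'$ is strictly decreasing there, with $h'(\xi)\to\frac{1}{p_2}>0$ as $\xi\to 0^+$ and $h'(\xi)\to-\infty$ as $\xi\to+\infty$. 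Solving $h'(\xi)=0$ yields $\xi^{\frac{q_1+\sigma-p_2}{p_2}}=\frac{e\sigma q_1}{q_1+\sigma}B_1^{-(q_1+\sigma)}$, that is $\xi=\alpha_1$ by the definition \eqref{eq:alpha1}. Consequently $h'>0$ on $(0,\alpha_1)$ and $h'<0$ on $(\alpha_1,+\infty)$, which is precisely assertion 1.

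Next, for assertion 2 the crucial observation is that \eqref{eq:alpha1} is equivalent to the identity $\alpha_1^{\frac{q_1+\sigma-p_2}{p_2}}=\frac{e\sigma q_1}{q_1+\sigma}B_1^{-(q_1+\sigma)}$, which upon multiplying by $\alpha_1$ gives $\alpha_1^{\frac{q_1+\sigma}{p_2}}=\frac{e\sigma q_1}{q_1+\sigma}B_1^{-(q_1+\sigma)}\alpha_1$. Substituting this into \eqref{eq:h} evaluated at $\xi=\alpha_1$, the power term collapses to $\frac{1}{e\sigma q_1}B_1^{q_1+\sigma}\cdot\frac{e\sigma q_1}{q_1+\sigma}B_1^{-(q_1+\sigma)}\alpha_1=\frac{\alpha_1}{q_1+\sigma}$, so that
\[
	h(\alpha_1)=\frac{\alpha_1}{p_2}-\frac{\alpha_1}{q_1+\sigma}=\left(\frac{1}{p_2}-\frac{1}{q_1+\sigma}\right)\alpha_1=E_1
\]
by \eqref{eq:E1}. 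Finally, assertion 3 is immediate: since $\frac{q_1+\sigma}{p_2}>1$, the term $-\frac{1}{e\sigma q_1}B_1^{q_1+\sigma}\xi^{\frac{q_1+\sigma}{p_2}}$ dominates the linear term $\frac{1}{p_2}\xi$ as $\xi\to+\infty$, whence $h(\xi)\to-\infty$.

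The computation carries no genuine analytic difficulty; the only point requiring care is the exponent bookkeeping when isolating $\alpha_1$ from \eqref{eq:alpha1} and verifying that the unique critical point of $h$ coincides exactly with the constant $\alpha_1$ already fixed there, so that items 1 and 2 are mutually consistent.
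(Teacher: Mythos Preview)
Your proof is correct and follows essentially the same approach as the paper: compute $h'(\xi)$, identify its unique zero as $\alpha_1$ via \eqref{eq:alpha1}, and use $q_1+\sigma-p_2>0$ to determine the monotonicity and the limit at infinity. Your treatment of assertion~2 is in fact more explicit than the paper's, which simply asserts that $h(\alpha_1)=E_1$ follows directly from \eqref{eq:E1} and \eqref{eq:h}.
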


\begin{proof}
First of all, from the definition \eqref{eq:h} of the function $h$ we can easily compute
\begin{equation*}
	h^{\prime}(\xi)=\frac{1}{p_2}-\frac{q_1+\sigma}{p_2}\frac{1}{e\sigma q_1}B_1^{q_1+\sigma}\xi^{\frac{q_1+\sigma}{p_2}-1}.
\end{equation*}

Then, it is straightforward to check that $h'(\xi)>0$ for $\xi<\alpha_1$ and $h'(\xi)<0$ for $\xi>\alpha_1$. Besides, the fact that $h(\alpha_1) = E_1$ follows directly from \eqref{eq:E1} and \eqref{eq:h}. Finally, since by definition of $p_2$, $q_1$ and $\sigma$ we have $q_1-p_2+\sigma >0$, we get
\begin{align*}
	\lim_{\xi\to +\infty} h(\xi) = \lim_{\xi\to +\infty} \xi\left(\frac{1}{p_2} - \frac{1}{e\sigma q_1}B_1^{q_1+\sigma}\xi^{\frac{q_1-p_2+\sigma}{p_2}}\right) = -\infty.
\end{align*}
This concludes our proof.
\end{proof}

\begin{lemma}\label{lem:alpha2}
Let $\sigma$, $\alpha_1$, $B_1$ and $E_1$ be given as in \eqref{eq:sigma}, \eqref{eq:alpha1}, \eqref{eq:B1} and \eqref{eq:E1}, respectively. Assume that the initial value $u_0$ is chosen so that
\begin{equation}\label{eq:u0hyp}
	0\leq E(0)<\frac{p_2}{q_1+p_2} \left(E_1-\frac{\sigma\alpha_1}{p_2(q_1+\sigma)}\right)\quad\mbox{ and }\quad \alpha_1< \left\Vert \nabla u_0\right\Vert_{p(.)}^{p_2}\leq B_1^{-p_2}.
\end{equation}
Then ,there exists a positive constant $\alpha_2>\alpha_1$ such that
\begin{subequations}
	\begin{align}
		& \left\Vert \nabla u \right\Vert_{p(.)}^{p_2}\geq \alpha_2, \quad\mbox{ for all } t\geq 0	\label{eq:alpha2_1}
		\\[10pt]
		&\int_{\Omega}\frac{1}{q(.)}\left\vert u\right\vert^{q(.)}\ln(|u|)\mathrm{d}x\geq \frac{1}{e\sigma q_1}B_1^{q_1+\sigma}\alpha_2^{\frac{q_1+\sigma}{p_2}} \label{eq:alpha2_2}
		\\[10pt]
		&\frac{\alpha_2}{\alpha_1}\geq \left(\left(q_1+\sigma\right)\left(\frac{1}{p_2}-\frac{E(0)}{\alpha_1}\right)\right)^{\frac{p_2}{q_1+\sigma-p_2}}>1. \label{eq:alpha2_3}
	\end{align}
\end{subequations}
\end{lemma}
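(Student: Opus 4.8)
The plan is to use the fact that, on the interval $[0,B_1^{-p_2}]$, the lower bound $g$ of Lemma~\ref{lem:energy_est} coincides with the function $h$ of Lemma~\ref{lem:lemma_h}, which increases on $(0,\alpha_1]$, decreases on $[\alpha_1,+\infty)$ with unique maximum $h(\alpha_1)=E_1$, and tends to $-\infty$; this geometry will trap $\alpha(t)=\|\nabla u(t)\|_{p(.)}^{p_2}$. First I would check that for $0\le\xi\le B_1^{-p_2}$ one has $\xi\le 1$ and $B_1\xi^{1/p_2}\le 1$ (because $B_1\ge 1$), so in \eqref{eq:g} the minimum equals $\xi$ and the maximum equals $B_1^{q_1+\sigma}\xi^{(q_1+\sigma)/p_2}$, whence $g(\xi)=h(\xi)$ on $[0,B_1^{-p_2}]$. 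I would also record that the hypothesis $0\le E(0)<\frac{p_2}{q_1+p_2}\big(E_1-\frac{\sigma\alpha_1}{p_2(q_1+\sigma)}\big)$, together with the identity $E_1-\frac{\sigma\alpha_1}{p_2(q_1+\sigma)}=\frac{q_1-p_2}{p_2(q_1+\sigma)}\alpha_1$ and $q_1>p_2$, implies the weaker bound $0\le E(0)<E_1$. Since by Lemma~\ref{lem:lemma_h} the restriction of $h$ to $[\alpha_1,+\infty)$ is a strictly decreasing bijection onto $(-\infty,E_1]$, I then \emph{define} $\alpha_2$ to be the unique point of $(\alpha_1,+\infty)$ with $h(\alpha_2)=E(0)$, so $\alpha_2>\alpha_1$ automatically.

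The first genuine step is to show $\alpha_2\le B_1^{-p_2}$. By hypothesis $\alpha_1<\alpha(0)\le B_1^{-p_2}$ with $\alpha(0)=\|\nabla u_0\|_{p(.)}^{p_2}$, and Lemma~\ref{lem:energy_est} together with $g=h$ on $[0,B_1^{-p_2}]$ gives $h(\alpha(0))=g(\alpha(0))\le E(0)=h(\alpha_2)$; since $\alpha(0)$ and $\alpha_2$ both lie in the strictly decreasing branch of $h$, this forces $\alpha_2\le\alpha(0)\le B_1^{-p_2}$. Now \eqref{eq:alpha2_1} follows by an invariant-region argument. From $g(\alpha(t))\le E(t)\le E(0)<E_1=h(\alpha_1)$ and $g=h$ on $[0,B_1^{-p_2}]$ one sees $\alpha(t)\ne\alpha_1$ for every $t$ (if $\alpha(t)\le B_1^{-p_2}$ because then $h(\alpha(t))<E_1=h(\alpha_1)$ and $\alpha_1$ is the unique maximizer of $h$; if $\alpha(t)>B_1^{-p_2}$ trivially, since $B_1^{-p_2}>\alpha_1$), so by continuity of $t\mapsto\|\nabla u(t)\|_{p(.)}^{p_2}$ and $\alpha(0)>\alpha_1$ we get $\alpha(t)>\alpha_1$ for all $t\ge0$. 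If moreover $\alpha(t_0)\in(\alpha_1,\alpha_2)$ for some $t_0$, then $\alpha(t_0)<\alpha_2\le B_1^{-p_2}$, so $h(\alpha(t_0))=g(\alpha(t_0))\le E(t_0)\le E(0)=h(\alpha_2)$, contradicting the strict decrease of $h$ on $[\alpha_1,+\infty)$; hence $\alpha(t)\notin(\alpha_1,\alpha_2)$, and combined with $\alpha(t)>\alpha_1$ this yields $\alpha(t)\ge\alpha_2$ for all $t\ge0$.

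It then remains to exploit $h(\alpha_2)=E(0)$. For \eqref{eq:alpha2_2}, \eqref{eq:alpha2_1} and the modular inequality $\int_\Omega|\nabla u|^{p(.)}\,\mathrm{d}x\ge\min\big(\|\nabla u\|_{p(.)}^{p_1},\|\nabla u\|_{p(.)}^{p_2}\big)\ge\alpha_2$ (the last step since $\alpha(t)\ge\alpha_2$ and $\alpha_2\le B_1^{-p_2}\le 1$) give $\int_\Omega\frac{1}{p(.)}|\nabla u|^{p(.)}\,\mathrm{d}x\ge\alpha_2/p_2$; substituting this and $\int_\Omega\frac{1}{q^2(.)}|u|^{q(.)}\,\mathrm{d}x\ge 0$ into \eqref{eq:energy} and using $E(t)\le E(0)=h(\alpha_2)$ yields $\int_\Omega\frac{1}{q(.)}|u|^{q(.)}\ln(|u|)\,\mathrm{d}x\ge\frac{\alpha_2}{p_2}-E(0)=\frac{1}{e\sigma q_1}B_1^{q_1+\sigma}\alpha_2^{\frac{q_1+\sigma}{p_2}}$. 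For \eqref{eq:alpha2_3}, put $\lambda:=\alpha_2/\alpha_1>1$ and use the identity $\frac{1}{e\sigma q_1}B_1^{q_1+\sigma}\alpha_1^{(q_1+\sigma)/p_2}=\frac{\alpha_1}{q_1+\sigma}$ (which is $h(\alpha_1)=E_1$ rewritten) to turn $h(\alpha_2)=E(0)$ into $\frac{E(0)}{\alpha_1}=\frac{\lambda}{p_2}-\frac{\lambda^{(q_1+\sigma)/p_2}}{q_1+\sigma}$, i.e. $\lambda^{(q_1+\sigma)/p_2}=(q_1+\sigma)\big(\frac{\lambda}{p_2}-\frac{E(0)}{\alpha_1}\big)\ge\lambda(q_1+\sigma)\big(\frac{1}{p_2}-\frac{E(0)}{\alpha_1}\big)$, the last inequality because $E(0)\ge0$ and $\lambda\ge1$; dividing by $\lambda$ and raising to the power $\frac{p_2}{q_1+\sigma-p_2}>0$ gives the first inequality of \eqref{eq:alpha2_3}, and $(q_1+\sigma)\big(\frac{1}{p_2}-\frac{E(0)}{\alpha_1}\big)>1$ is precisely the bound $E(0)<E_1$.

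The main obstacle is the invariant-region step: one has to be careful that the geometry of $h$ controls $g$, and hence $\alpha(t)$, only on $[0,B_1^{-p_2}]$, so the argument genuinely needs the verification $\alpha_2\le B_1^{-p_2}$ (after which leaving this range can only push $\alpha(t)$ further above $\alpha_2$), and it needs the continuity of $t\mapsto\|\nabla u(t)\|_{p(.)}^{p_2}$, guaranteed by the regularity of the weak solution, to preclude $\alpha(t)$ sliding below $\alpha_1$. Once this trapping is secured, \eqref{eq:alpha2_2} and \eqref{eq:alpha2_3} are routine manipulations of the energy identity \eqref{eq:energy} and the relation $h(\alpha_2)=E(0)$.
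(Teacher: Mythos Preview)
Your proof is correct and follows essentially the same approach as the paper: define $\alpha_2$ as the unique point on the decreasing branch of $h$ with $h(\alpha_2)=E(0)$, use $g=h$ on $[0,B_1^{-p_2}]$ together with $h(\alpha(0))\le E(0)$ to obtain $\alpha_2\le\alpha(0)\le B_1^{-p_2}$, trap $\alpha(t)\ge\alpha_2$ by a continuity/monotonicity contradiction, and then derive \eqref{eq:alpha2_2} and \eqref{eq:alpha2_3} from the energy identity and the relation $h(\alpha_2)=E(0)$. Your treatment is in fact a bit more explicit than the paper's about where the identification $g=h$ is used and about the case $\alpha(t)>B_1^{-p_2}$, but the argument is the same.
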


\begin{proof}
Since from \eqref{eq:u0hyp} we have
\begin{align*}
	0\leq E(0) <\frac{p_2E_1}{q_1+p_2}<E_1,	
\end{align*}
it follows from Lemma \ref{lem:lemma_h} that there exists a positive constant $\alpha_2>\alpha_1$ such that the initial energy $E(0)$ satisfies  $E(0) =h(\alpha_2)$ (see Figure \ref{fig:h}). Moreover, since $B_1 >1$ (see \eqref{eq:B1}), it follows again from \eqref{eq:u0hyp} that
\begin{align*}
	\alpha_0 = \left\Vert \nabla u_0\right\Vert_{p(.)}^{p_2}\leq B_1^{-p_2} <1.
\end{align*}

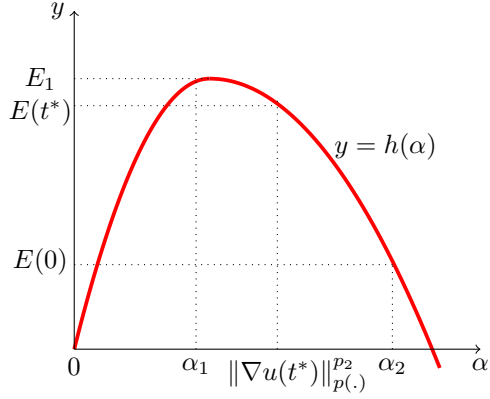
\begin{SCfigure}[0.6][h]
\begin{tikzpicture}[scale=0.9]	
	\draw[red, line width = 0.50mm]   plot[smooth,domain=0:3.4] (\x, {4-0.37*\x^2});
	\draw[red, line width = 0.50mm]   plot[smooth,domain=-2:0] (\x, {4+\x^2});
	\draw[->] (-2,0) -- (4,0) node[anchor=north] {$\alpha$};
	\draw	(-2,0) node[anchor=north] {0}
	(-.2,0) node[anchor=north] {$\alpha_{1}$}(1.3,0) node[anchor=north] {$\left\Vert \nabla u(t^{\ast})\right\Vert_{p(.)}^{p_2}$ };
	\draw[dotted] (1,0) -- (1,3.7);
	\draw[dotted] (-2,1.25) -- (2.7,1.25);
	\draw	(-2.5,1.3)  node{$E(0)$};
	\draw[dotted] (-2,3.6) -- (1,3.6);
	\draw (-2.5,3.5)  node{$E(t^{*})$};
	\draw (2.7,0) node[anchor=north] {$\alpha_{2}$};
	\draw[dotted] (2.7,0) -- (2.7,1.3);
	\draw[dotted] (-2,4) -- (0,4);
	\draw[->] (-2,0) -- (-2,5) node[anchor=east] {$y$};
	\draw[dotted] (-0.2,0) -- (-0.2,4);
	\draw (-2.5,4)  node{$E_{1}$};
	\draw (2.6,3) node {$y=h(\alpha)$}; 
\end{tikzpicture}
\vspace{-0.5cm}\caption{Diagram for the proof of Lemma \protect\ref{lem:alpha2}. We plot the function $h$, highlighting the quantities $\alpha_1$, $\alpha_2$ and $\|\nabla u(t^\ast)\|_{p(.)}^{p_2}$ with the corresponding values of the energy ($E_1$, $E(0)$ and $E(t^\ast)$, respectively).}\label{fig:h}
\end{SCfigure}

Therefore, recalling the definitions \eqref{eq:g} and \eqref{eq:h} of the functions $g$ and $h$, since $p_1<p_2$, and using \eqref{eq:est_g_energy}, we can conclude that
\begin{align*}
	h(\alpha_0) = g(\alpha_0)\leq E(0) = h(\alpha_2).
\end{align*}
Hence, it follows from Point 1 in Lemma \ref{lem:lemma_h} that $\alpha_0\geq\alpha_2$, and \eqref{eq:alpha2_1} holds for $t=0$.

To prove \eqref{eq:alpha2_1} for all $t\geq 0$, we argue by contradiction. Suppose that there exists some strictly positive time $t^{\ast}>0$ such that $\left\Vert \nabla u(t^{\ast})\right\Vert_{p(.)}^{p_2}<\alpha_2$. Then, by the continuity of the $L^{p(.)}$-norm, and since $\alpha_2>\alpha_1$, we may take $t^{\ast}$ such that
\begin{align*}
	\alpha_2>\left\Vert \nabla u(t^{\ast})\right\Vert_{p(.)}^{p_2}>\alpha_1.
\end{align*}
Hence, it follows from Lemma \ref{lem:lemma_h} and \eqref{eq:est_g_energy} that
\begin{equation*}
	E(0) =h(\alpha_2)<h\left(\left\Vert \nabla u(t^{\ast})\right\Vert_{p(.)}^{p_2}\right) \leq E(t^\ast).
\end{equation*}

This contradicts the fact that the energy $E(t)$ is decreasing, as we proved in Lemma \ref{lem:energy_est}. Therefore, \eqref{eq:alpha2_1} holds.

Let us now prove \eqref{eq:alpha2_2}. At this regard, from the definition of the energy \eqref{eq:energy}, and since $E(t)\leq E(0)$, we obtain
\begin{align}\label{eq:int_est_prel}
	\int_{\Omega}\frac{1}{q(.)}\left\vert u\right\vert^{q(.)}\ln(|u|)\mathrm{d}x &= \int_{\Omega}\frac{1}{p(.)}\left\vert\nabla u \right\vert^{p(.)}\,\mathrm{d}x +\int_{\Omega}\frac{1}{q^2(.)}|u|^{q(.)}\,\mathrm{d}x - E(t) \notag
	\\
	&\geq \int_{\Omega}\frac{1}{p(.)}\left\vert\nabla u \right\vert^{p(.)}\,\mathrm{d}x - E(0) \notag 
	\\
	&\geq \frac{1}{p_2}\min \left(\left\Vert\nabla u\right\Vert_{p(.)}^{p_2},\left\Vert \nabla u\right\Vert_{p(.)}^{p_1}\right) -E(0) \notag
	\\
	& \geq \frac{1}{p_2}\min \left(\alpha_2^{\frac{p_1}{p_2}},\alpha_2\right)-E(0).
\end{align}
Moreover, since $\alpha_2\leq\alpha_0<1$ and $p_1\leq p_2$, we have
\begin{align*}
	\min \left(\alpha_2^{\frac{p_1}{p_2}},\alpha_2\right) = \alpha_2.
\end{align*}
Hence, it follows from \eqref{eq:h} and \eqref{eq:int_est_prel} that
\begin{align*}
	\int_{\Omega}\frac{1}{q(.)}\left\vert u \right\vert^{q(.)}\ln(|u|)\mathrm{d}x \geq \frac{1}{p_2}\alpha_2-E(0) = \frac{1}{p_2}\alpha_2-h(\alpha_2) =\frac{1}{e\sigma q_1}B_1^{q_1+\sigma}\alpha_2^{\frac{q_1+\sigma}{p_2}}.
\end{align*}

Finally, let us prove \eqref{eq:alpha2_3}. To this end, since $E(0)<E_1$, recalling the definition of $E_1$ given in \eqref{eq:E1} we can readily check that
\begin{equation*}
	\left(\left(q_1+\sigma\right)\left(\frac{1}{p_2}-\frac{E(0)}{\alpha_1}\right)\right)^{\frac{p_2}{q_1-p_2+\sigma}} > \left(\left(q_1+\sigma\right)\left(\frac{1}{p_2}-\frac{E_1}{\alpha_1}\right)\right)^{\frac{p_2}{q_1-p_2+\sigma}} = 1.
\end{equation*}
Hence, the second inequality in \eqref{eq:alpha2_3} holds. As for the first inequality, we can easily compute
\begin{align*}
	E(0) &= h(\alpha_2) = \alpha_2\left(\frac{1}{p_2} -\frac{1}{e\sigma q_1}B_1^{q_1+\sigma}\alpha_2^{\frac{q_1+\sigma}{p_2}-1}\right) = \alpha_1\frac{\alpha_2}{\alpha_1}\left(\frac{1}{p_2} -\frac{1}{e\sigma q_1}B_1^{q_1+\sigma}\left(\frac{\alpha_2}{\alpha_1}\right)^{\frac{q_1+\sigma}{p_2}-1}\alpha_1^{\frac{q_1+\sigma}{p_2}-1}\right).
\end{align*}
Recalling the definition of $\alpha_1$ given in \eqref{eq:alpha1}, and since $\alpha_2>\alpha_1$ we then have that
\begin{align*}
	E(0) = \alpha_1\frac{\alpha_2}{\alpha_1}\left(\frac{1}{p_2}-\frac{1}{q_1+\sigma}\left(\frac{\alpha_2}{\alpha_1}\right)^{\frac{q_1+\sigma-p_2}{p_2}}\right) \geq \alpha_1\left(\frac{1}{p_2}-\frac{1}{q_1+\sigma}\left(\frac{\alpha_2}{\alpha_1}\right)^{\frac{q_1+\sigma -p_2}{p_2}}\right).
\end{align*}
Therefore,
\begin{align*}
	\frac{E(0)}{\alpha_1} \geq \frac{1}{p_2}-\frac{1}{q_1+\sigma}\left(\frac{\alpha_2}{\alpha_1}\right)^{\frac{q_1+\sigma -p_2}{p_2}},
\end{align*}
from which it follows immediately that
\begin{align*}
	\frac{\alpha_2}{\alpha_1} \geq \left((q_1+\sigma)\left(\frac{1}{p_2}-\frac{E(0)}{\alpha_1}\right)\right)^{\frac{p_2}{q_1+\sigma -p_2}}
\end{align*}
Our proof is then concluded.
\end{proof}

\noindent Let us define
\begin{equation}\label{eq:H}
	H(t) := E_1 - \frac{\sigma\alpha_1}{p_2(q_1+\sigma)} - E(t), \quad \text{ for }t\geq 0.
\end{equation}
We have the following result.

\begin{lemma}\label{lem:H}
Let $\sigma$, $\alpha_1$, $B_1$ and $E_1$ be given as in \eqref{eq:sigma}, \eqref{eq:alpha1}, \eqref{eq:B1} and \eqref{eq:E1}, respectively. Assume that the initial value $u_0$ is chosen so that
\begin{equation*}
	0\leq E(0)<\frac{p_2}{q_1+p_2}\left(E_1-\frac{\sigma\alpha_1}{p_2(q_1+\sigma)}\right)\quad\mbox{ and }\quad \alpha_1< \left\Vert \nabla u_0\right\Vert_{p(.)}^{p_2}\leq B_1^{-p_2}.
\end{equation*}
Then, the functional $H(t)$ defined in \eqref{eq:H} satisfies the following estimates:
\begin{equation}\label{eq:est_H}
	0<H(0)\leq H(t)\leq\int_{\Omega}\frac{1}{q(.)}\left\vert u \right\vert^{q(.)}\ln(|u|)\mathrm{d}x,\quad t\geq 0.
\end{equation}
\end{lemma}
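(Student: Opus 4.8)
The plan is to establish the three inequalities in \eqref{eq:est_H} in turn, using Lemma~\ref{lem:energy_est} for the monotonicity of the energy and Lemma~\ref{lem:alpha2} (whose hypotheses coincide with those assumed here) for the lower bound on $\left\Vert\nabla u\right\Vert_{p(.)}^{p_2}$.

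I would begin with the algebraic identity that underlies everything. Setting $A := E_1 - \frac{\sigma\alpha_1}{p_2(q_1+\sigma)}$ and substituting the definition \eqref{eq:E1} of $E_1$, one finds after simplification
\begin{equation*}
	A = \alpha_1\left(\frac{1}{p_2}-\frac{1}{q_1+\sigma}\right) - \frac{\sigma\alpha_1}{p_2(q_1+\sigma)} = \frac{(q_1-p_2)\alpha_1}{p_2(q_1+\sigma)},
\end{equation*}
which is strictly positive because $q_1>p_2$ and $\alpha_1>0$. Since $q_1>0$ forces $\frac{p_2}{q_1+p_2}<1$, the smallness hypothesis on $u_0$ gives $E(0)<\frac{p_2}{q_1+p_2}A<A$, hence $H(0)=A-E(0)>0$, which is the left inequality in \eqref{eq:est_H}. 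For the middle inequality I would simply observe that $H(t)-H(0)=E(0)-E(t)\geq 0$ for every $t\geq0$, since $E$ is non-increasing by \eqref{eq:energyDer}; therefore $H(0)\leq H(t)$.

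The right inequality in \eqref{eq:est_H} is the substantive part. Starting from \eqref{eq:energy} and dropping the nonnegative term $\int_\Omega\frac{1}{q^2(.)}|u|^{q(.)}\,\mathrm{d}x$, I would write
\begin{equation*}
	\int_{\Omega}\frac{1}{q(.)}|u|^{q(.)}\ln(|u|)\,\mathrm{d}x \geq \int_{\Omega}\frac{1}{p(.)}|\nabla u|^{p(.)}\,\mathrm{d}x - E(t) \geq \frac{1}{p_2}\min\left(\left\Vert\nabla u\right\Vert_{p(.)}^{p_1},\left\Vert\nabla u\right\Vert_{p(.)}^{p_2}\right) - E(t),
\end{equation*}
where the last step uses the inequality between the modular and the Luxemburg norm in $L^{p(.)}(\Omega)$, exactly as in the derivation of \eqref{eq:est_g_energy}. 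I would then argue that $\min\big(\left\Vert\nabla u\right\Vert_{p(.)}^{p_1},\left\Vert\nabla u\right\Vert_{p(.)}^{p_2}\big)\geq\alpha_2$ for all $t\geq0$: if $\left\Vert\nabla u\right\Vert_{p(.)}\geq1$ the minimum equals $\left\Vert\nabla u\right\Vert_{p(.)}^{p_1}\geq1\geq\alpha_2$, since $\alpha_2\leq\left\Vert\nabla u_0\right\Vert_{p(.)}^{p_2}\leq B_1^{-p_2}\leq1$ by \eqref{eq:alpha2_1} at $t=0$, the hypothesis on $u_0$, and $B_1\geq1$; whereas if $\left\Vert\nabla u\right\Vert_{p(.)}<1$ the minimum equals $\left\Vert\nabla u\right\Vert_{p(.)}^{p_2}\geq\alpha_2$ by \eqref{eq:alpha2_1}. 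Consequently
\begin{equation*}
	\int_{\Omega}\frac{1}{q(.)}|u|^{q(.)}\ln(|u|)\,\mathrm{d}x \geq \frac{\alpha_2}{p_2} - E(t),
\end{equation*}
and it remains only to note that $\frac{\alpha_2}{p_2}\geq A$, which is equivalent to $\alpha_2\geq\frac{q_1-p_2}{q_1+\sigma}\alpha_1$ and holds because $\frac{q_1-p_2}{q_1+\sigma}<1$ while $\alpha_2>\alpha_1$ by Lemma~\ref{lem:alpha2}. Hence $\int_{\Omega}\frac{1}{q(.)}|u|^{q(.)}\ln(|u|)\,\mathrm{d}x\geq A-E(t)=H(t)$, completing \eqref{eq:est_H}.

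I expect the only delicate point to be the case distinction on $\left\Vert\nabla u(t)\right\Vert_{p(.)}$ in the last step: because we have no a priori upper bound on this quantity, the regimes $\left\Vert\nabla u\right\Vert_{p(.)}\geq1$ and $\left\Vert\nabla u\right\Vert_{p(.)}<1$ must be treated separately in order to pass from the modular to the constant $\alpha_2$. Everything else is bookkeeping built directly on Lemmas~\ref{lem:energy_est} and \ref{lem:alpha2}.
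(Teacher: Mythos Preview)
Your proof is correct and follows essentially the same route as the paper: both use the monotonicity of $E$ for $H(0)\leq H(t)$, then rewrite $H(t)-\int_\Omega\frac{1}{q(.)}|u|^{q(.)}\ln(|u|)\,\mathrm{d}x$ via the energy definition, drop the nonnegative $\int_\Omega\frac{1}{q^2(.)}|u|^{q(.)}\,\mathrm{d}x$ term, bound $\int_\Omega\frac{1}{p(.)}|\nabla u|^{p(.)}\,\mathrm{d}x$ below through the Luxemburg-norm inequality and \eqref{eq:alpha2_1}, and close with an elementary comparison of constants. The only difference is presentational: the paper drops the positive constant $\frac{\sigma\alpha_1}{p_2(q_1+\sigma)}$ and compares to $E_1$ directly, whereas you keep track of $A=E_1-\frac{\sigma\alpha_1}{p_2(q_1+\sigma)}$ and check $\frac{\alpha_2}{p_2}\geq A$; and you make the case distinction on $\left\Vert\nabla u\right\Vert_{p(.)}\gtrless 1$ explicit, a step the paper leaves implicit when passing from $\min\big(\left\Vert\nabla u\right\Vert_{p(.)}^{p_2},\left\Vert\nabla u\right\Vert_{p(.)}^{p_1}\big)$ to $\alpha_1$.
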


\begin{proof}
By Lemma \ref{lem:energy_est}, we know that $H(t)$ is non-decreasing for all $t\geq 0$. Hence
\begin{equation}\label{eq:est_H_prel}
	H(t)\geq H(0) = E_1 - \frac{\sigma\alpha_1}{p_2(q_1+\sigma)} - E(0).
\end{equation}
By the definition \eqref{eq:energy} of $E(t)$, we have
\begin{align*}
	H(t) & -\int_{\Omega}\frac{1}{q(.)}\left\vert u \right\vert^{q(.)}\ln(|u|)\,\mathrm{d}x 
	\\
	&= E_1 - \frac{\sigma\alpha_1}{p_2(q_1+\sigma)} - \int_{\Omega}\frac{1}{p(.)}\left\vert \nabla u \right\vert^{p(.)}dx -\int_{\Omega}\frac{1}{q^2(.)}\left\vert u \right\vert^{q(.)}\,\mathrm{d}x
	\\
	& <E_1-\int_{\Omega}\frac{1}{p(.)}\left\vert \nabla u \right\vert^{p(.)}\,\mathrm{d}x \notag.
\end{align*}
Hence, using \eqref{eq:alpha1}, \eqref{eq:E1}, \eqref{eq:alpha2_1} and the fact that $\alpha_2>\alpha_1$, for all $t\geq 0$ we have
\begin{align}\label{eq:est_H_prel2}
	H(t) -\int_{\Omega}\frac{1}{q(.)}\left\vert u \right\vert^{q(.)}\ln(|u|)\mathrm{d}x & < E_1-\frac{1}{p_2}\min \left(\left\Vert \nabla u\right\Vert_{p(.)}^{p_2},\left\Vert \nabla u \right\Vert_{p(.)}^{p_1}\right) 
	\\
	&<\left(\frac{1}{p_2}-\frac{1}{q_1+1}\right) \alpha_1-\frac{1}{p_2}\alpha_1<0.\notag 
\end{align}
Thus, \eqref{eq:est_H} follows immediately from \eqref{eq:est_H_prel} and \eqref{eq:est_H_prel2}.
\end{proof}

We are now ready to state and prove our main result of this Section, concerning the blow-up of solutions for \eqref{eq:main_eq}.
\begin{theorem}\label{thm:Theo1}
Let $\sigma$, $\alpha_1$, $B_1$ and $E_1$ be given as in \eqref{eq:sigma}, \eqref{eq:alpha1}, \eqref{eq:B1} and \eqref{eq:E1}, respectively. Assume that the initial value $u_0$ is chosen so that \eqref{eq:u0hyp} holds. Then, the corresponding solution of problem \eqref{eq:main_eq} will blow-up in finite time $T^\ast$. Furthermore, we have the following upper estimate for $T^\ast$:
\begin{equation*}
	0< T^\ast < \frac{\Vert u_0\Vert_{H_0^1(\Omega)}^{p_2}}{(\mathcal B+p_2-2)\Big(\frac{q_1-p_2}{q_1+\sigma}\alpha_1 - (\mathcal B + p_2)E(0)\Big)},
\end{equation*}
where we have denoted
\begin{align}\label{eq:B}
	\mathcal B = \mathcal B(\alpha_1,\alpha_2,p_2,q_1,\sigma) := (q_1-p_2)\left(1-\left(\frac{\alpha_1}{\alpha_2}\right)^{\frac{\sigma+q_1}{p_2}}\right)>0,
\end{align}
with $\alpha_2$ as in Lemma \ref{lem:alpha2}.
\end{theorem}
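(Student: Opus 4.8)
The quantity to track is the natural ``pseudo-parabolic energy'' of the solution,
\begin{equation*}
	L(t):=\|u(t)\|_{H_0^1(\Omega)}^2=\|u(t)\|_2^2+\|\nabla u(t)\|_2^2 ,
\end{equation*}
which is legitimate to differentiate because $u\in L^\infty(0,T;W_0^{1,p(.)}(\Omega))\subset L^\infty(0,T;H_0^1(\Omega))$ (recall $p(.)\geq 2$) and $u_t\in L^2(0,T;H_0^1(\Omega))$. Testing the weak formulation of \eqref{eq:main_eq} with $v=u$ gives
\begin{equation*}
	\tfrac12 L'(t)=\langle u_t,u\rangle+\langle\nabla u_t,\nabla u\rangle=-\int_\Omega|\nabla u|^{p(.)}\,\mathrm{d}x+\int_\Omega|u|^{q(.)}\ln(|u|)\,\mathrm{d}x .
\end{equation*}
The plan is to bound this right-hand side from below by a strictly positive quantity, thereby obtaining a first-order differential inequality that cannot hold on an unbounded time interval.

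To produce such a bound I would add and subtract $(\mathcal B+p_2)E(t)$ on the right-hand side and substitute the explicit expression \eqref{eq:energy} for $E(t)$. Since $\mathcal B+p_2<q_1\leq q(x)$ (by \eqref{eq:B}) and $p(x)\leq p_2$, the rearranged integrand carries the \emph{good} signs: $\tfrac{\mathcal B+p_2}{p(.)}-1\geq\tfrac{\mathcal B}{p_2}\geq 0$ in front of $|\nabla u|^{p(.)}$, a nonnegative coefficient in front of $|u|^{q(.)}\ln(|u|)$, and the nonnegative term $(\mathcal B+p_2)\int_\Omega q(.)^{-2}|u|^{q(.)}\,\mathrm{d}x$, which can be discarded. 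Using $E(t)\leq E(0)$ from Lemma~\ref{lem:energy_est}, the lower bounds $\|\nabla u(t)\|_{p(.)}^{p_2}\geq\alpha_2$ and $\int_\Omega q(.)^{-1}|u|^{q(.)}\ln(|u|)\,\mathrm{d}x\geq\tfrac{1}{e\sigma q_1}B_1^{q_1+\sigma}\alpha_2^{(q_1+\sigma)/p_2}$ from Lemma~\ref{lem:alpha2}, and $H(t)\geq H(0)>0$ from Lemma~\ref{lem:H} (together with the bound $H(t)\leq\int_\Omega q(.)^{-1}|u|^{q(.)}\ln(|u|)\,\mathrm{d}x$), one is led to an estimate of the shape $\tfrac12 L'(t)\geq(\mathcal B+p_2-2)\,\mathcal D\cdot(\text{a nonnegative, nondecreasing function of }L(t))$, with
\begin{equation*}
	\mathcal D:=\frac{q_1-p_2}{q_1+\sigma}\alpha_1-(\mathcal B+p_2)E(0).
\end{equation*}
The positivity of $\mathcal D$ is precisely where hypothesis \eqref{eq:u0hyp} is used: rewriting $\tfrac{q_1-p_2}{q_1+\sigma}\alpha_1=p_2\big(E_1-\tfrac{\sigma\alpha_1}{p_2(q_1+\sigma)}\big)$ by means of \eqref{eq:E1}, and recalling $\mathcal B+p_2<q_1$, the assumption $E(0)<\tfrac{p_2}{q_1+p_2}\big(E_1-\tfrac{\sigma\alpha_1}{p_2(q_1+\sigma)}\big)$ yields $\mathcal D>0$ (and $\mathcal B+p_2-2>0$ is immediate from $\mathcal B>0$, $p_2\geq 2$).

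Finally I would feed this differential inequality into a standard first-order comparison argument (in the spirit of \cite{Dai}): $L$ is strictly increasing, stays above $\|u_0\|_{H_0^1}^{p_2}$-controlled thresholds, and the inequality forces $L(t)$, hence $\|u(t)\|_{H_0^1(\Omega)}$, to escape to $+\infty$ at some finite $T^\ast$; integrating the inequality and carrying the explicit constants through gives the quantitative bound $T^\ast<\|u_0\|_{H_0^1(\Omega)}^{p_2}/\big((\mathcal B+p_2-2)\mathcal D\big)$. The main obstacle I anticipate is the variable-exponent bookkeeping that has to be threaded carefully: passing between $\int_\Omega|\nabla u|^{p(.)}\,\mathrm{d}x$ and powers of $\|\nabla u\|_{p(.)}$ requires the $\min/\max$ with the exponents $p_1,p_2$ together with the fact (used already in Lemma~\ref{lem:alpha2}) that $\alpha_2\leq\alpha_0<1$, and the logarithmic source term must be controlled on $\{|u|<1\}$ exactly as in the proof of Lemma~\ref{lem:energy_est}. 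Getting all these constants to collapse into the clean form appearing in the statement — with exactly $\|u_0\|_{H_0^1(\Omega)}^{p_2}$ in the numerator and $(\mathcal B+p_2-2)\mathcal D$ in the denominator — is the delicate bookkeeping step rather than a conceptual one.
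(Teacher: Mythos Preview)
Your proposal identifies the right starting point (differentiating $L(t)=\|u\|_{H_0^1}^2$ and rewriting via the energy $E(t)$) and correctly invokes Lemmas~\ref{lem:energy_est}--\ref{lem:H}. But there is a genuine conceptual gap: the manipulations you describe produce only a \emph{constant} lower bound $\tfrac12 L'(t)\geq\mathcal D'>0$, not a superlinear differential inequality in $L(t)$. A bound of the form $L'\geq C$ yields $L(t)\to\infty$ as $t\to\infty$, not finite-time blow-up. Your phrase ``a nonnegative, nondecreasing function of $L(t)$'' hides exactly the place where the argument breaks: nothing in Lemmas~\ref{lem:alpha2} or~\ref{lem:H} controls the right-hand side of $L'$ by a power of $L$ itself, since those lemmas bound the logarithmic source term and the $p(.)$-Dirichlet integral from below by \emph{fixed constants} (functions of $\alpha_2$), not by $\|u\|_{H_0^1}$. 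There is also no embedding that would let you lower-bound $\int_\Omega|\nabla u|^{p(.)}\,\mathrm{d}x$ by a superlinear power of $L(t)$, since $p(.)\geq2$ runs in the wrong direction.

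The paper supplies the missing mechanism via a two-function argument. It sets $\varphi=\tfrac12 L$ and introduces the auxiliary functional $\psi(t)=-\zeta q_1 E(t)+p_2 H(t)$ (with $\zeta q_1=\mathcal B$), so that $\psi'(t)=(\mathcal B+p_2)\int_\Omega(|u_t|^2+|\nabla u_t|^2)\,\mathrm{d}x$. The Cauchy--Schwarz inequality then gives
\[
\varphi(t)\,\psi'(t)\;\geq\;\frac{\mathcal B+p_2}{2}\,\big(\varphi'(t)\big)^2 .
\]
Combined with the first-order bound $\varphi'\geq\psi>0$ (which is essentially what your argument does yield), this produces $\psi'/\psi\geq\tfrac{\mathcal B+p_2}{2}\,\varphi'/\varphi$, hence $\psi(t)\geq\psi(0)\big(\varphi(t)/\varphi(0)\big)^{(\mathcal B+p_2)/2}$, and therefore the \emph{superlinear} inequality $\varphi'\geq C\,\varphi^{(\mathcal B+p_2)/2}$ with exponent $(\mathcal B+p_2)/2>1$. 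This is what forces blow-up at $T^\ast=2\varphi(0)/\big((\mathcal B+p_2-2)\psi(0)\big)$. The essential idea your proposal is missing is precisely this coupling of the dissipation $\|u_t\|_{H_0^1}^2$ (which drives $\psi'$) back to $(\varphi')^2$ via Cauchy--Schwarz; a purely first-order comparison on $L$ alone cannot close the loop.
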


\begin{proof}
Let us define the function
\begin{equation*}
	\varphi (t) =\frac{1}{2}\int_{\Omega}u^2\,\mathrm{d}x+\frac 12\int_{\Omega}\left\vert \nabla u\right\vert^2\,\mathrm{d}x= \frac 12\Vert u \Vert_{H_0^1(\Omega)}^2.
\end{equation*}
Then, using \eqref{eq:main_eq} and integration by parts, the derivative $\varphi^{\prime}(t)$ satisfies
\begin{align*}
	\varphi^{\prime}(t) &=\int_{\Omega}uu_t \,\mathrm{d}x + \int_{\Omega}\nabla u\cdot\nabla u_t \,\mathrm{d}x \notag
	\\
	&=\int_{\Omega}u\left(\Delta u_t+\operatorname{div}\left(\left\vert \nabla u\right\vert^{p(.)-2}\nabla u\right) +\left\vert u\right\vert^{q(.)-2}u\ln(|u|)\right)\,\mathrm{d}x - \int_{\Omega} u\Delta u_t \,\mathrm{d}x \notag
	\\
	&=-\int_{\Omega}\left\vert \nabla u \right\vert^{p(.)}\,\mathrm{d}x + \int_{\Omega}\left\vert u\right\vert^{q(.)}\ln (|u|)\,\mathrm{d}x.
\end{align*}

Recalling the definitions \eqref{eq:energy} and \eqref{eq:H} of the energy $E(t)$ and the function $H(t)$, from the above identity we can estimate
\begin{align}\label{eq:phiPrime_est}
	\varphi^{\prime}(t) &\geq -p_2E(t)-p_2\int_{\Omega}\frac{1}{q(.)}\left\vert u \right\vert^{q(.)}\ln(|u|)\mathrm{d}x +\int_{\Omega}\left\vert  u\right\vert^{q(.)}\ln(|u|)\,\mathrm{d}x \notag
	\\
	&\geq -\left(p_2E_1 - \frac{\sigma\alpha_1}{q_1+\sigma}\right) + p_2H(t)+\frac{q_1-p_2}{q_1}\int_{\Omega}\left\vert u\right\vert^{q(.)}\ln(|u|)\,\mathrm{d}x.
\end{align}
Moreover, from \eqref{eq:alpha1}, \eqref{eq:E1} and \eqref{eq:alpha2_2} we have
\begin{align}\label{eq:p2E1}
	p_2E_1 - \frac{\sigma\alpha_1}{q_1+\sigma} & =\frac{q_1-p_2}{q_1+\sigma}\alpha_1 
	\\
	&=\frac{q_1-p_2}{q_1+\sigma}\alpha_1^{\frac{q_1+\sigma}{p_2}} \alpha_1^{-\frac{q_1-p_2+\sigma}{p_2}} = \frac{q_1-p_2}{q_1+\sigma}\alpha_1^{\frac{q_1+\sigma}{p_2}}\left(\frac{e\sigma q_1}{q_1+\sigma}B_1^{-(q_1+\sigma)}\right)^{-1} \notag 
	\\
	&=\left(\frac{e\sigma q_1}{q_1+\sigma}\right)^{-1}\frac{q_1-p_2}{q_1+\sigma}B_1^{q_1+\sigma}\alpha_1^{\frac{q_1+\sigma}{p_2}} =\frac{q_1-p_2}{q_1}\left(\frac{1}{e\sigma}B_1^{q_1+\sigma}\alpha_1^{\frac{q_1+\sigma}{p_2}}\right)\notag
	\\
	&= \frac{q_1-p_2}{q_1}\left(\frac{\alpha_1}{\alpha_2}\right)^{\frac{q_1+\sigma}{p_2}}\left(\frac{1}{e\sigma q_1}B_1^{q_1+\sigma}\alpha_2^{\frac{q_1+\sigma}{p_2}}\right) \notag 
	\\
	&\leq \frac{q_1-p_2}{q_1}\left(\frac{\alpha_1}{\alpha_2}\right)^{\frac{q_1+\sigma}{p_2}}\int_{\Omega}\left\vert u\right\vert^{q(.)}\ln(|u|)\,\mathrm{d}x. \notag
\end{align}
Then, it follows \eqref{eq:phiPrime_est} and \eqref{eq:p2E1} that
\begin{equation}\label{eq:phiDer}
	\varphi^{\prime}(t) \geq \frac{q_1-p_2}{q_1}\left(1-\left(\frac{\alpha_1}{\alpha_2}\right)^{\frac{q_1+\sigma}{p_2}}\right) \int_{\Omega}\left\vert u\right\vert^{q(.)}\ln(|u|)\,\mathrm{d}x + p_2H(t).
\end{equation}
Besides, using \eqref{eq:sigma}, we can readily check that
\begin{equation}\label{eq:zeta}
	\zeta:= \frac{q_1-p_2}{q_1}\left(1-\left(\frac{\alpha_1}{\alpha_2}\right)^{\frac{q_1+\sigma}{p_2}}\right)>0.
\end{equation}
Now, let us define the function
\begin{align}\label{eq:psi}
	\psi(t) := -\zeta q_1E(t) + p_2H(t).
\end{align}
Then, using \eqref{eq:energyDer} and \eqref{eq:H}, we have
\begin{align}\label{eq:psiDer}
	\psi^{\prime}(t) & = -\zeta q_1 E'(t) + p_2H'(t) 
	\\
	&= -(\zeta q_1+p_2)E'(t) = (\zeta q_1+p_2) \int_{\Omega}\left(\left\vert u_t\right\vert^2+\left\vert \nabla u_t\right\vert^2\right)\, \mathrm{d}x>0. \notag
\end{align}
Hence, using Cauchy-Schwarz's inequality, we obtain
\begin{align}\label{eq:phipsiDer}
	\varphi(t) \psi^{\prime}(t) & =\frac{\zeta q_1+p_2}{2}\left[\int_{\Omega}\left(\left\vert u\right\vert^2+\left\vert \nabla u\right\vert^2\right) \mathrm{d}x\right] \left[\int_{\Omega}\left(\left\vert u_t\right\vert^2+\left\vert \nabla u_t\right\vert^2\right) \mathrm{d}x\right] \notag
	\\
	& \geq \frac{\zeta q_1+p_2}{2}\left(\int_{\Omega}uu_t \mathrm{d}x+\int_{\Omega}\nabla u\nabla u_t \mathrm{d}x\right)^2 \notag
	\\
	& =\frac{\zeta q_1+p_2}{2}\varphi^{\prime}(t)^2.
\end{align}
Since, by definition, $\zeta<1$, we have from \eqref{eq:u0hyp} that
\begin{align*}
	E(0)<\frac{p_2}{q_1+p_2}\left(E_1-\frac{\sigma\alpha_1}{p_2(q_1+\sigma)}\right)<\frac{p_2}{\zeta q_1+p_2}\left(E_1-\frac{\sigma\alpha_1}{p_2(q_1+\sigma)}\right).
\end{align*}
Hence, we obtain that
\begin{align}\label{eq:psiZero}
	\psi(0) &= -\zeta q_1E(0) + p_2H(0) 
	\\
	&= -(\zeta q_1+p_2)E(0) + p_2\left(E_1-\frac{\sigma\alpha_1}{p_2(q_1+\sigma)}\right) >0 \notag
\end{align}
which, thanks to \eqref{eq:psiDer}, yields $\psi(t) >0$ for all $t\geq 0$. Moreover, from \eqref{eq:phiDer} and \eqref{eq:psi} it is easy to check that $\varphi^{\prime }(t) \geq \psi(t)$. Hence from \eqref{eq:phipsiDer}, we get
\begin{equation*}
	\varphi(t)\psi^{\prime }(t) \geq \frac{\zeta q_1+p_2}{2}\varphi^{\prime }(t)\psi(t)
\end{equation*}
which can be written as
\begin{equation}\label{eq:phipsiDerEst}
	\frac{\psi^{\prime }(t)}{\psi(t)}\geq \frac{\zeta q_1+p_2}{2}\frac{\varphi^{\prime }(t)}{\varphi(t)}.
\end{equation}
Integrating \eqref{eq:phipsiDerEst} from $0$ to $t$ and using \eqref{eq:psi}, we have
\begin{equation}\label{eq:phipsiDerEst2}
	\frac{\varphi^{\prime}(t)}{\varphi(t)^{\frac{\zeta q_1+p_2}{2}}}\geq \frac{\psi(0)}{\varphi(0)^{\frac{\zeta q_1+p_2}{2}}}.
\end{equation}%
Integrating also \eqref{eq:phipsiDerEst2} from $0$ to $t$, we obtain
\begin{align*}
	-\frac{2}{\zeta q_1+p_2-2}\left. \varphi(s)^{-\frac{\zeta q_1+p_2}{2}-1}\right|_{s=0}^{s=t} \geq \frac{\psi(0)}{\varphi(0)^{\frac{\zeta q_1+p_2}{2}}} t,
\end{align*}
that is,
\begin{align*}
	\frac{1}{\varphi(t)^{\frac{\zeta q_1+p_2-2}{2}}} \leq \frac{1}{\varphi(0)^{\frac{\zeta q_1+p_2-2}{2}}} - \frac{\zeta q_1+p_2-2}{2}\frac{\psi(0)}{\varphi(0)^{\frac{\zeta q_1+p_2}{2}}} t,
\end{align*}

From the above inequality, by means of some easy algebraic manipulations, we finally get
\begin{align*}
	\varphi(t)^{\frac{\zeta q_1+p_2-2}{2}} \geq \frac{\varphi(0)^{\frac{\zeta q_1+p_2}{2}}}{\varphi(0) - \frac{\zeta q_1+p_2-2}{2}\psi(0)t}
\end{align*}
Let
\begin{equation}\label{eq:Tast}
	0<T^{\ast }=\frac{2\varphi(0)}{(\zeta q_1+p_2-2)\psi(0)}.
\end{equation}
Then
\begin{align*}
	\varphi(0) - \frac{\zeta q_1+p_2-2}{2}\psi(0)T^\ast = 0,
\end{align*}
yielding the blowing-up of $\varphi (t)$ at time $T^{\ast}$. By definition of $\varphi$, we can then conclude that $u(x,t)$ blows-up in $H_0^1(\Omega)$-norm at $t=T^\ast$.

Finally, notice that by means of \eqref{eq:B}, \eqref{eq:zeta} and \eqref{eq:psiZero}, and recalling the definition of $\varphi$, we can easily obtain from \eqref{eq:Tast}
\begin{align*}
	T^{\ast } = \frac{\|u_0\|^2_{H_0^1(\Omega)}}{(\mathcal B+p_2-2)\left(\frac{q_1-p_2}{q_1+\sigma}\alpha_1-(\mathcal B+p_2)E(0)\right)}.
\end{align*}
This concludes our proof.
\end{proof}

\section{Non-blow-up case}\label{sec:4}

In this section, we present some non blow-up conditions for the solution of \eqref{eq:main_eq}. In particular, we are going to show that, when the initial datum $u_0$ is small enough in $H_0^1(\Omega)$, blow-up cannot manifest at any time $t\geq 0$. In addition to that, we will also provide explicit time-decay estimates for the $H_0^1(\Omega)$-norm of the solution.

In order to state our main result in this section, let us denote with $\lambda_1$ the first eigenvalue of the Dirichlet Laplacian on $\Omega$ and introduce the following constants
\begin{equation}\label{eq:constantsAB}
	\mathrm{A}=\left\vert \Omega \right\vert^{\frac{2-p_1}{2}}\left(\frac{\lambda_1}{2(\lambda_1+1)}\right)^{\frac{p_1}{2}}, \quad \mathrm{B}=\frac{4}{e\sigma}B_{\sigma}\mathcal C_1^{q_1+\sigma}, \quad \mathrm{C} =\frac{4}{e\sigma}B_{\sigma}\mathcal C_2^{q_2+\sigma}.
\end{equation}

In \eqref{eq:constantsAB}, $\mathcal C_1,\mathcal C_2>0,$ are the best embedding constants of $H_0^1(\Omega)$ into $W_0^{1,q_1+\sigma}(\Omega)$ and $W_0^{1,q_2+\sigma}(\Omega)$, respectively. Moreover, $\sigma$ is defined as in \eqref{eq:sigma} and $B_\sigma$ is the constant introduced in \eqref{eq:Bsigma} for the embedding $W_0^{1,p(.)}(\Omega)\hookrightarrow L^{q(.) +\sigma}(\Omega)$. We then have the following non blow-up result for the weak solutions of \eqref{eq:main_eq}.

\begin{theorem}\label{thm:Theo3}
Let $u_0\in W_0^{1,p(.)}(\Omega)$ satisfy
\begin{align}\label{eq:u0cond}
	\Vert u_0\Vert_{H_0^1(\Omega)}<\min \left(\left(\frac{\mathrm{A}}{\mathrm{2B}}\right)^{\frac{1}{q_1-p_1+\sigma}},\left(\frac{\mathrm{A}}{\mathrm{2C}}\right)^{\frac{1}{q_2-p_1+\sigma}}\right),
\end{align}
where the constants $\mathrm{A}$, $\mathrm{B}$ and $\mathrm{C}$ have been defined in \eqref{eq:constantsAB}. Then, the corresponding weak solution $u$ of \eqref{eq:main_eq} cannot blow-up in $H_0^1(\Omega)$-norm, and the following decay estimates hold
\begin{subequations}
	\begin{align}
		&\Vert u(t)\Vert_{H_0^1(\Omega)}\leq \left(\frac{4}{\mathrm{A}(p_1-2)t+4\left\Vert u_0\right\Vert_{H_0^1(\Omega)}^{2-p_1}}\right)^{\frac{1}{p_1-2}}, \quad \mbox{ if } p_1>2 \label{eq:decay1}
		\\[10pt]
		&\Vert u(t)\Vert_{H_0^1(\Omega)}\leq \left(\frac{A\|u_0\|^{q_1-2+\sigma}_{H_0^1(\Omega)}}{\mathrm{A}-B\left\Vert u_0\right\Vert_{H_0^1(\Omega)}^{q_1-2+\sigma}}\right)^{\frac{1}{q_1-2+\sigma}}e^{-\frac A2 t}, \quad \mbox{ if } p_1=2. \label{eq:decay2}
	\end{align}
\end{subequations}
\end{theorem}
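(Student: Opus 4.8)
The plan is to run a first-order differential inequality argument for the functional
\[
\varphi(t)=\tfrac12\|u(t)\|_{H_0^1(\Omega)}^2,
\]
the same quantity used in the proof of Theorem \ref{thm:Theo1}, but now extracting a \emph{dissipative} rather than an explosive inequality. First I would reproduce, from the weak formulation and integration by parts exactly as in Theorem \ref{thm:Theo1}, the identity
\[
\varphi'(t)=-\int_{\Omega}|\nabla u|^{p(.)}\,\mathrm dx+\int_{\Omega}|u|^{q(.)}\ln(|u|)\,\mathrm dx .
\]
The whole proof then reduces to showing that, under \eqref{eq:u0cond}, the negative diffusion term dominates the logarithmic source.

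For the diffusion term I would use a variable-exponent Hölder inequality together with $p(.)\ge p_1\ge 2$ on the bounded set $\Omega$ to pass from the modular $\int_{\Omega}|\nabla u|^{p(.)}\,\mathrm dx$ to $\|\nabla u\|_{L^2(\Omega)}^{p_1}$, and then the Poincaré inequality (with first Dirichlet eigenvalue $\lambda_1$, giving $\|\nabla u\|_{L^2(\Omega)}^2\ge\frac{\lambda_1}{\lambda_1+1}\|u\|_{H_0^1(\Omega)}^2$) to obtain $\int_{\Omega}|\nabla u|^{p(.)}\,\mathrm dx\ge \mathrm A\,\|u\|_{H_0^1(\Omega)}^{p_1}$, which is exactly where the constant $\mathrm A$ of \eqref{eq:constantsAB} originates. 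For the logarithmic term I would use \eqref{eq:estLog}, the pointwise bound $|u|^{q(x)+\sigma}\le|u|^{q_1+\sigma}+|u|^{q_2+\sigma}$, and the embeddings relating $H_0^1(\Omega)$, $W_0^{1,q_i+\sigma}(\Omega)$ and $L^{q_i+\sigma}(\Omega)$ (constants $B_\sigma$, $\mathcal C_1$, $\mathcal C_2$) to get $\int_{\Omega}|u|^{q(.)}\ln(|u|)\,\mathrm dx\le\frac14\big(\mathrm B\,\|u\|_{H_0^1(\Omega)}^{q_1+\sigma}+\mathrm C\,\|u\|_{H_0^1(\Omega)}^{q_2+\sigma}\big)$. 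Writing $z(t)=\|u(t)\|_{H_0^1(\Omega)}^2$, these two estimates yield
\begin{equation*}
z'(t)\le -\mathrm A\,z(t)^{p_1/2}+\tfrac14\mathrm B\,z(t)^{(q_1+\sigma)/2}+\tfrac14\mathrm C\,z(t)^{(q_2+\sigma)/2}.
\end{equation*}

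Next I would close the inequality by a continuity/bootstrap argument. Since $q_1-p_1+\sigma>0$ and $q_2-p_1+\sigma>0$, the smallness hypothesis \eqref{eq:u0cond} is precisely what guarantees $\frac14\mathrm B\,z^{(q_1+\sigma)/2}+\frac14\mathrm C\,z^{(q_2+\sigma)/2}\le\frac{\mathrm A}{2}\,z^{p_1/2}$ whenever $z\le z(0)$; defining $T_1$ as the largest time (within the maximal existence interval) up to which $z\le z(0)$, on $[0,T_1)$ one has $z'\le-\frac{\mathrm A}{2}z^{p_1/2}<0$, so $z$ is strictly decreasing, which forces $T_1=+\infty$. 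Hence $\|u(t)\|_{H_0^1(\Omega)}$ stays bounded (in fact non-increasing), the solution is global and cannot blow up in $H_0^1(\Omega)$-norm, and the reduced inequality $z'(t)\le-\frac{\mathrm A}{2}z(t)^{p_1/2}$ holds for all $t\ge0$. For $p_1>2$, rewriting this as $\frac{\mathrm d}{\mathrm dt}\,z^{1-p_1/2}\ge\frac{\mathrm A(p_1-2)}{4}$ and integrating from $0$ to $t$ gives \eqref{eq:decay1}; for $p_1=2$ one instead keeps the logarithmic contribution as a single term (using $z<1$ so that $z^{(q_2+\sigma)/2}\le z^{(q_1+\sigma)/2}$), arriving at the Bernoulli-type inequality $z'\le-\mathrm A z+\mathrm B\,z^{(q_1+\sigma)/2}$, whose explicit integration produces \eqref{eq:decay2}.

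The step I expect to be the main obstacle is the lower bound $\int_{\Omega}|\nabla u|^{p(.)}\,\mathrm dx\ge\mathrm A\,\|u\|_{H_0^1(\Omega)}^{p_1}$: since $p(.)$ is only measurable and $|\nabla u|$ may lie below $1$ on a set of large measure, one cannot compare $|\nabla u|^{p(x)}$ with $|\nabla u|^{p_1}$ pointwise, so the modular--norm equivalence for variable exponents must be invoked carefully, and this is the place where both the boundedness of $\Omega$ and the precise shape of $\mathrm A$ in \eqref{eq:constantsAB} are used. The remaining work --- tracking the constants $B_\sigma$, $\mathcal C_1$, $\mathcal C_2$ through the logarithmic estimate and checking that the two conditions in \eqref{eq:u0cond} match exactly the two exponents $q_1+\sigma$ and $q_2+\sigma$, so that the favorable coefficient $\mathrm A/2$ survives in the reduced inequality --- is routine but must be done with care.
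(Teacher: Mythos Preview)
Your proposal matches the paper's proof in overall structure and in essentially every step: the identity for $\varphi'(t)$, the logarithmic bound via \eqref{eq:estLog}, the passage to a differential inequality of the form $\theta'\le -2\mathrm A\,\theta^{p_1/2}+\mathrm B\,\theta^{(q_1+\sigma)/2}+\mathrm C\,\theta^{(q_2+\sigma)/2}$ for $\theta=\|u\|_{H_0^1}^2$, the continuity/invariance argument trapping $\theta$ in the small-data region, and the integration of the resulting Bernoulli-type ODE in the two cases $p_1>2$ and $p_1=2$. The one place where the paper is concrete and you only gesture is precisely the diffusion lower bound you correctly flag as the obstacle: the paper's device is to split $\Omega=\Omega_1\cup\Omega_2$ with $\Omega_1=\{|\nabla u|<1\}$ and $\Omega_2=\{|\nabla u|\ge 1\}$, discard the $\Omega_1$ contribution on the left, use the pointwise inequality $|\nabla u|^{p(x)}\ge|\nabla u|^{p_1}$ on $\Omega_2$, and then apply H\"older and Poincar\'e there; the factor $\tfrac{\lambda_1}{2(\lambda_1+1)}$ inside $\mathrm A$ records exactly this split. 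No abstract modular--norm equivalence is invoked. A minor variation: for the source term the paper also routes through the same $\Omega_1/\Omega_2$ split (after applying $B_\sigma$ to pass from $|u|$ to $|\nabla u|$), whereas your pointwise bound $|u|^{q(x)+\sigma}\le|u|^{q_1+\sigma}+|u|^{q_2+\sigma}$ is a cleaner alternative that leads to the same inequality.
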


\begin{proof}
Define the auxiliary function
\begin{equation}\label{eq:theta}
	\theta(t):= 2\varphi(t) =\int_{\Omega}u^2\,\mathrm{d}x+\int_{\Omega}\left\vert \nabla u\right\vert^2\,\mathrm{d}x=\Vert u\Vert_{H_0^1(\Omega)}^2.
\end{equation}
We then have from \eqref{eq:main_eq} and integration by parts that
\begin{align}\label{eq:thetaDer}
	\theta^{\prime}(t) & = 2\int_{\Omega}uu_t\,\mathrm{d}x + 2\int_{\Omega}\nabla u\nabla u_t\,\mathrm{d}x = -2\int_{\Omega}\left\vert\nabla u\right\vert^{p(.)}\,\mathrm{d}x + 2 \int_\Omega |u|^{q(.)}\ln(|u|)\,\mathrm{d}x.
\end{align}
For any $t>0$, we split the domain $\Omega$ into $\Omega=\Omega_1\cup\Omega_2$ with
\begin{equation*}
	\Omega_1:= \Big\{x\in \Omega \,:\,|\nabla u(x,t)|<1\Big\}\quad\text{ and }\quad \Omega_2=\Big\{x\in \Omega \,:\,|\nabla u(x,t)|\geq 1\Big\}.
\end{equation*}
We then have
\begin{subequations}
	\begin{align}
		\int_{\Omega}\left\vert \nabla u\right\vert^{p(.)}\mathrm{d}x &= \int_{\Omega_1}\left\vert \nabla u\right\vert^{p(.)}\,\mathrm{d}x + \int_{\Omega_2}\left\vert \nabla u\right\vert^{p(.)}\mathrm{d}x \notag
		\\
		&\geq \int_{\Omega_1}\left\vert \nabla u\right\vert^{p_2}\,\mathrm{d}x + \int_{\Omega_2}\left\vert \nabla u\right\vert^{p_1}\mathrm{d}x\geq \int_{\Omega_2}\left\vert \nabla u\right\vert^{p_1}\mathrm{d}x.\label{eq:gradEst1}
		\\[7pt]
		\int_{\Omega}\left\vert \nabla u\right\vert^{\rho+\sigma}\mathrm{d}x &= \int_{\Omega_1}\left\vert \nabla u\right\vert^{\rho+\sigma}\mathrm{d}x + \int_{\Omega_2}\left\vert \nabla u\right\vert^{\rho+\sigma}\mathrm{d}x \leq 2\int_{\Omega_2}\left\vert \nabla u\right\vert^{\rho+\sigma}\mathrm{d}x.\label{eq:gradEst2}
	\end{align}
\end{subequations}
Using \eqref{eq:Bsigma}, \eqref{eq:estLog}, \eqref{eq:gradEst1} and \eqref{eq:gradEst2}, from \eqref{eq:thetaDer} we can then estimate
\begin{align}\label{eq:thetaDerEst}
	\theta^{\prime}(t) & \leq -2\int_{\Omega_2}\left\vert \nabla u\right\vert^{p_1}\,\mathrm{d}x + \frac{2}{e\sigma}\int_{\Omega}\left\vert u\right\vert^{q(.)+\sigma}\,\mathrm{d}x
	\\
	& \leq -2\int_{\Omega_2}\left\vert \nabla u\right\vert^{p_1}\,\mathrm{d}x + \frac{2}{e\sigma}B_{\sigma}\int_{\Omega}\left\vert \nabla u\right\vert^{q(.)+\sigma}\,\mathrm{d}x\notag
	\\
	& \leq -2\int_{\Omega_2}\left\vert \nabla u\right\vert^{p_1}\,\mathrm{d}x + \frac{2}{e\sigma}B_{\sigma}\left(\int_{\Omega_1}\left\vert \nabla u\right\vert^{q_1+\sigma}\,\mathrm{d}x + \int_{\Omega_2}\left\vert \nabla u\right\vert^{q_2+\sigma}\,\mathrm{d}x\right) \notag
	\\
	& \leq -2\int_{\Omega_2}\left\vert \nabla u\right\vert^{p_1}\,\mathrm{d}x + \frac{4}{e\sigma}B_{\sigma}\left(\int_{\Omega_2}\left\vert \nabla u\right\vert^{q_1+\sigma}\,\mathrm{d}x + \int_{\Omega_2}\left\vert \nabla u\right\vert^{q_2+\sigma},\mathrm{d}x\right) \notag
\end{align}
By H\"{o}lder's inequality, we have
\begin{equation*}
	\left\vert \Omega \right\vert^{\frac{2-p_1}{2}}\left(\int_{\Omega_2}\left\vert \nabla u\right\vert^2\,\mathrm{d}x\right)^{\frac{p_1}{2}}\leq \int_{\Omega_2}\left\vert \nabla u\right\vert^{p_1}\,\mathrm{d}x,
\end{equation*}
while the Poincar\'{e}'s inequality gives
\begin{equation*}
	\int_{\Omega}\left\vert \nabla u\right\vert^2\,\mathrm{d}x\geq \lambda_1\int_{\Omega}\left\vert u\right\vert^2\,\mathrm{d}x,
\end{equation*}
where $\lambda_1$ is the first eigenvalue of the Dirichlet Laplacian on $\Omega$. Thus \eqref{eq:theta} satisfies the following inequality
\begin{align*}
	\theta(t) & \leq \left(\frac{1}{\lambda_1}+1\right) \int_{\Omega}\left\vert \nabla u\right\vert^2\,\mathrm{d}x
	\\
	&= \left(\frac{1}{\lambda_1}+1\right) \int_{\Omega_1}\left\vert \nabla u\right\vert^2\,\mathrm{d}x+\left(\frac{1}{\lambda_1}+1\right) \int_{\Omega_2}\left\vert \nabla u\right\vert^2\,\mathrm{d}x \leq 2\frac{\lambda_1+1}{\lambda_1}\int_{\Omega_2}\left\vert \nabla u\right\vert^2\,\mathrm{d}x.\notag
\end{align*}
Moreover, from the Sobolev embedding we have
\begin{align*}
	\int_{\Omega_2}\left\vert \nabla u\right\vert^{q_1+\sigma}\,\mathrm{d}x + \int_{\Omega_2}\left\vert \nabla u\right\vert^{q_2+\sigma}\,\mathrm{d}x & \leq \int_{\Omega}\left\vert \nabla u\right\vert^{q_1+\sigma}\,\mathrm{d}x + \int_{\Omega}\left\vert \nabla u\right\vert^{q_2+\sigma}\,\mathrm{d}x
	\\
	& \leq C_1^{q_1+\sigma}\theta(t)^{\frac{q_1+\sigma}{2}} + C_2^{q_2+\sigma}\theta(t)^{\frac{q_2+\sigma}{2}}.
\end{align*}
Thus, \eqref{eq:thetaDerEst} gives
\begin{align}\label{eq:thetaDerBound}
	\theta^{\prime}(t) & \leq -2\left\vert \Omega\right\vert^{\frac{2-p_1}{2}}\left(\frac{\lambda_1}{2(\lambda_1+1)}\right)^{\frac{p_1}{2}}\theta(t)^{\frac{p_1}{2}} + \frac{4}{e\sigma}B_{\sigma}\left(C_1^{q_1+\sigma}\theta(t)^{\frac{q_1+\sigma}{2}}+C_2^{q_2+\sigma}\theta(t)^{\frac{q_2+\sigma}{2}}\right) \notag
	\\
	& =-\theta(t)^{\frac{p_1}{2}}\left(\mathrm{A}-\mathrm{B}\theta(t)^{\frac{q_1-p_1+\sigma}{2}}\right) -\theta(t)^{\frac{p_1}{2}}\left(\mathrm{A}-\mathrm{C}\theta(t)^{\frac{q_2-p_1+\sigma}{2}}\right).
\end{align}
We now claim that
\begin{equation}\label{eq:thetaBound}
	\theta(t) <\min \left(\left(\frac{\mathrm{A}}{\mathrm{2B}}\right)^{\frac{2}{q_1-p_1+\sigma}},\left(\frac{\mathrm{A}}{\mathrm{2C}}\right)^{\frac{2}{q_2-p_1+\sigma}}\right), \quad\mbox{ for all }t\geq 0.
\end{equation}
Indeed, if we suppose \eqref{eq:thetaBound} is not satisfied, by the continuity of $\theta(t)$ and \eqref{eq:u0cond}, there exists $t_0>0$ such that
\begin{equation}\label{eq:theta1}
	\theta(t_0) =\min \left(\left(\frac{\mathrm{A}}{\mathrm{2B}}\right)^{\frac{2}{q_1-p_1+\sigma}},\left(\frac{\mathrm{A}}{\mathrm{2C}}\right)^{\frac{2}{q_2-p_1+\sigma}}\right),
\end{equation}
and
\begin{equation}\label{eq:theta2}
	\theta(t) <\min \left(\left(\frac{\mathrm{A}}{\mathrm{2B}}\right)^{\frac{2}{q_1-p_1+\sigma}},\left(\frac{\mathrm{A}}{\mathrm{2C}}\right)^{\frac{2}{q_2-p_1+\sigma}}\right)\quad \text{ for }0\leq t<t_0.
\end{equation}
Since we are assuming in \eqref{eq:u0cond} that
\begin{align*}
	\theta(0)<\min \left(\left(\frac{\mathrm{A}}{\mathrm{2B}}\right)^{\frac{2}{q_1-p_1+\sigma}},\left(\frac{\mathrm{A}}{\mathrm{2C}}\right)^{\frac{2}{q_2-p_1+\sigma}}\right),
\end{align*}
we get from \eqref{eq:theta1} that $\theta'(t)>0$ for $0<t<t_0$. On the other hand, \eqref{eq:thetaDerBound} and \eqref{eq:theta2} implies that $\theta^{\prime}(t)<0$ for $0<t<t_0$. We then found a contradiction, meaning that \eqref{eq:thetaBound} holds. In particular, we have that $\theta^{\prime}(t)<0$ for all $t\geq 0$. Hence, the $H_0^1(\Omega)$-norm of the solution to \eqref{eq:main_eq} decays in time and cannot blow-up.

To obtain the decay estimates \eqref{eq:decay1} and \eqref{eq:decay2}, without losing generality let us assume that $\theta(t)\neq 0$ for all $t\geq 0$. We are allowed to do that because, if there exists $t_0\geq 0$ such that $\theta(t_0)=0$ then, since $\theta(t)$ is non-negative and decreasing for all $t\geq 0$, we would also have
\begin{align*}
	\theta(t)=0 \quad \mbox{ for all }t\geq t_0
\end{align*}
and \eqref{eq:decay1} and \eqref{eq:decay2} would be trivially satisfied.

If, on the other hand, $\theta(t)\neq 0$ for all $t\geq 0$, then from the differential inequality \eqref{eq:thetaDerBound} and \eqref{eq:thetaBound} we can write
\begin{align}\label{eq:thetaFrac}
	1\leq -\frac{\theta^{\prime}(t)}{\theta(t)^{\frac{p_1}{2}}\left(\mathrm{A}-\mathrm{B}\theta(t)^{\frac{q_1-p_1+\sigma}{2}}\right)}.
\end{align}	
We shall now distinguish two cases:

\paragraph{\textbf{Case 1: $p_1>2$.}} Integrating \eqref{eq:thetaFrac} over the interval $(0,t)$, and taking into account that $\theta(t)\leq\theta(0)$ for all $t\geq 0$, we obtain
\begin{align}\label{eq:thetaInt1}
	t\leq -\int_{\theta(0)}^{\theta(t)}\frac{d\gamma}{\gamma^{\frac{p_1}{2}}\left(\mathrm{A}-\mathrm{B}\gamma^{\frac{q_1-p_1+\sigma}{2}}\right)} = \int_{\theta(t)}^{\theta(0)}\frac{d\gamma}{\gamma^{\frac{p_1}{2}}\left(\mathrm{A}-\mathrm{B}\gamma^{\frac{q_1-p_1+\sigma}{2}}\right)},
\end{align}
which is equivalent to
\begin{align}\label{eq:thetaInt2}
	 t\leq\frac{1}{\mathrm{A}}\int_{\theta(t)}^{\theta(0)}\left(\frac{1}{\gamma^{\frac{p_1}{2}}}+\frac{\mathrm{B}\gamma^{\frac{q_1-p_1+\sigma}{2}}}{\gamma^{\frac{p_1}{2}}\left(\mathrm{A}-\mathrm{B}\gamma^{\frac{q_1-p_1+\sigma}{2}}\right)}\right)d\gamma.
\end{align}
Moreover, since by \eqref{eq:thetaBound} we have
\begin{align*}
	\frac{\mathrm{B}\gamma^{\frac{q_1-p_1+\sigma}{2}}}{\mathrm{A}-\mathrm{B}\gamma^{\frac{q_1-p_1+\sigma}{2}}}\leq 1\quad\mbox{ for all }\gamma\in[\theta(t),\theta(0)],
\end{align*}
we obtain from \eqref{eq:thetaInt2} that
\begin{align*}
	t & \leq\frac{2}{\mathrm{A}}\int_{\theta(t)}^{\theta(0)}\frac{d\gamma}{\gamma^{\frac{p_1}{2}}} = \frac{4}{A(2-p_1)}\left(\theta(0)^{\frac{2-p_1}{2}}-\theta(t)^{\frac{2-p_1}{2}}\right) 
	\\
	&= \frac{4}{A(2-p_1)}\Big(\|u_0\|^{2-p_1}-\|u(.,t)\|^{2-p_1}\Big).
\end{align*}
From the above inequality, \eqref{eq:decay1} easily follows by means of simple algebraic manipulations.

\paragraph{\textbf{Case 2: $p_1=2$.}} When $p_1=2$, we obtain from \eqref{eq:thetaInt1} that
\begin{align}\label{eq:thetaInt3}
	t\leq \int_{\theta(t)}^{\theta(0)}\frac{d\gamma}{\gamma\left(\mathrm{A}-\mathrm{B}\gamma^{\frac{q_1-2+\sigma}{2}}\right)}.
\end{align}
For simplicity of notation, we shall denote
\begin{align}\label{eq:varpi}
	\varpi:= \frac{q_1-2+\sigma}{2}.
\end{align}
Then if we introduce the change of variables $\xi=\gamma^\varpi$, we can compute
\begin{align}\label{eq:thetaInt4}
	\int_{\theta(t)}^{\theta(0)}\frac{d\gamma}{\gamma\left(\mathrm{A}-\mathrm{B}\gamma^\varpi\right)} &= \frac 1\varpi\int_{\theta(t)^\varpi}^{\theta(0)^\varpi}\frac{d\xi}{\xi\left(\mathrm{A}-\mathrm{B}\xi\right)} 
	\\
	&= \frac{1}{\mathrm{A}\varpi}\ln\left(\frac{\theta(0)^\varpi}{\mathrm{A}-\mathrm{B}\theta(0)^\varpi} \frac{\mathrm{A}-\mathrm{B}\theta(t)^\varpi}{\theta(t)^\varpi}\right).\notag 
\end{align}
We then get from \eqref{eq:thetaInt3} and \eqref{eq:thetaInt4} that
\begin{align*}
	\frac{\theta(t)^\varpi}{\mathrm{A}-\mathrm{B}\theta(t)^\varpi}\leq \frac{\theta(0)^\varpi}{\mathrm{A}-\mathrm{B}\theta(0)^\varpi} e^{-\mathrm{A}\varpi t}.
\end{align*}
Since by \eqref{eq:u0cond} we have
\begin{align*}
	\mathrm{A}-\mathrm{B}\theta(t)^\varpi >0,
\end{align*}
we then obtain that
\begin{align*}
	\theta(t)^\varpi &\leq \left(\frac{\mathrm{A}\theta(0)^\varpi}{\mathrm{A}-\mathrm{B}\theta(0)^\varpi} - \frac{\mathrm{B}\theta(0)^\varpi}{\mathrm{A}-\mathrm{B}\theta(0)^\varpi}\theta(t)^\varpi\right) e^{-\mathrm{A}\varpi t} \leq \frac{\mathrm{A}\theta(0)^\varpi}{\mathrm{A}-\mathrm{B}\theta(0)^\varpi}  e^{-\mathrm{A}\varpi t}.
\end{align*}
Hence
\begin{align*}
	\theta(t) \leq \left(\frac{\mathrm{A}\theta(0)^\varpi}{\mathrm{A}-\mathrm{B}\theta(0)^\varpi}\right)^{\frac 1\varpi} e^{-\mathrm{A}t}.
\end{align*}

Since, by definition, $\theta(t)=\|u\|_{H_0^1(\Omega)}^2$, \eqref{eq:decay2} follows immediately from the above estimate and \eqref{eq:varpi}. Our proof is then concluded.
\end{proof}

\section{Numerical study}\label{sec:5}
In this section, we address a 2D numerical illustration of the problem 
\begin{align}\label{eq:mainnum_eq}
	\begin{cases}
		u_t-\Delta u_{t}-\func{div}\left(\left\vert \nabla u\right\vert^{p(x,y)-2}\nabla u\right) =|u|^{q(x,y)-2}u\ln (|u|), & (x,y,t)\in \Omega\times (0,T)
		\\
		u(x,y,t)=0, & (x,y,t)\in \partial \Omega\times (0,T)
		\\
		u(x,y,0)=u_0(x,y)\neq 0, & (x,y)\in \Omega
	\end{cases}
\end{align}
where $T>0$ and $u$ represents the velocity of a fluid in the domain $\Omega= [-1,1]^2$. 

Our objective is to represent numerically the decay rate obtained in Theorem \ref{thm:Theo3} in the case $p>2$. To this end, let us start by introducing the numerical scheme that we will employ for our experiments. First of all, for $n\in\mathbb{N}$, let us consider the uniform time-mesh
\begin{align*}
	t_n = n\delta t, \quad \delta t = \frac{T}{n}, 
\end{align*}
and define
\begin{align*}
	&u^n=u^n(x,y):=u(x,y,t_n)
	\\[5pt]
	&u^{n+1}_t:=\frac{u^{n+1}-u^{n}}{\Delta t}.
\end{align*}
We then approximate \eqref{eq:mainnum_eq} through
\begin{align}\label{eq:mainnum_eq_approx}
	\begin{cases}
		u^{n+1}_t - \Delta u_t^{n+1}-\func{div}\left(\left\vert \nabla u^n\right\vert^{p-2}\nabla u^n\right) = \left\vert u^n\right\vert^{q-2}u^n\ln (|u^n|) & \text{ in }\Omega 
		\\ 
		u^n=0 & \text{ on }\partial \Omega
		\\
		u^0=u_0, & \text{ in } \Omega
	\end{cases}
\end{align}

For the space-discretization, we use $\mathcal P_1$ Lagrangian Finite Elements implemented through the software \texttt{FreeFem++} \cite{Freefem}. To this end, we define a triangular mesh $\mathcal M_h$ of mesh-size $h$ over the domain $\Omega$, composed by 5000 triangles and 2601 vertices. Let $\mathcal V_h$ be the corresponding space of piece-wise linear continuous functions. Then, approximating \eqref{eq:mainnum_eq_approx} amounts at solving the following fully-discrete problem: find $u_h^n\in \mathcal V_h$ such that 
\begin{align}\label{eq:discrete}
	\int_\Omega\Big[u_{h,t}^{n+1}\omega +\nabla u_{h,t}^{n+1}\cdot\nabla \omega + \vert \nabla u_h^n & \vert^{p(x,y)-2} \nabla u_h^n\cdot\nabla \omega \Big]\, \mathrm{d}x\mathrm{d}y = \int_\Omega \left\vert u_h^n\right\vert^{q(x,y)-2}u_h^n\ln(|u_h^n|)\omega\, \mathrm{d}x\mathrm{d}y. 
\end{align}
In what follows, we consider the variable exponents 
\begin{align*}
	p(x,y) = 0.2\left\vert \lfloor x\rfloor \right\vert + 2.5 \quad\text{ and }\quad q(x,y) = 0.1\left\vert \lfloor x\rfloor \right\vert + 6,
\end{align*}%
where $\lfloor .\rfloor$ denotes the greatest integer function, so that
\begin{itemize}
	\item[1.] $2< 2.5 = p_1 < p_2 = 2.7 < 6 = q_1 < q_2 = 6.1 < p^{\ast }(x,y)=+\infty$.
	\item[2.] Condition \eqref{eq:sigma} is verified for all $\sigma>0$. In what follows, we will take $\sigma = 0.1$.
\end{itemize}
Moreover, we select the initial datum 
\begin{align*}
	u_0(x,y)=0.25e^{-x^2-y^2}.
\end{align*}
whose $H_0^1(\Omega)$-norm is 
\begin{align*}
	\|u_0\|_{H_0^1(\Omega)} = \left(\|u_0\|_{L^2(\Omega)}^2 + \|\nabla u_0\|_{L^2(\Omega)}^2\right)^{\frac 12} \sim 0.477344.
\end{align*}
Finally, the time horizon for our simulations will be $T=1$. 

Let us now present the results of our numerical experiments. First of all, we can see in Table \ref{tab:1} and Figure \ref{fig:initNorm} that, with this particular choice of the initial datum and variable exponents, the condition \eqref{eq:u0cond}, i.e.
\begin{align*}
	\|u_0\|_{H_0^1(\Omega)} < \min \left(\left(\frac{\mathrm{A}}{\mathrm{2B}}\right)^{\frac{1}{q_1-p_1+\sigma}},\left(\frac{\mathrm{A}}{\mathrm{2C}}\right)^{\frac{1}{q_2-p_1+\sigma}}\right)=: \mathrm{M}
\end{align*}
is satisfied. 
\begin{table}[h!]
	\centering 
	\begin{tabular}{|c|c|c|c|c|c|c|}
		\hline $t_n$ & 0 & 0.1 & 0.2 & 0.3 & 0.4 & 0.5
		\\
		\hline $\mathrm{M}$ & 0.671566 & 0.671906 & 0.672586 & 0.673228 & 0.673835 & 0.674407
		\\
		\hline $t_n$ & 0.6 & 0.7 & 0.8 & 0.9 & 1 &
		\\
		\hline $\mathrm{M}$  & 0.674946 & 0.675454 & 0.675932 & 0.676382 & 0.676804 &	
		\\
		\hline 
	\end{tabular}\caption{Values of $\mathrm{M}$ for $t\in [0,1]$.}\label{tab:1}
\end{table}

\begin{SCfigure}[0.5][h!]
	\centering
	\includegraphics[scale=0.5]{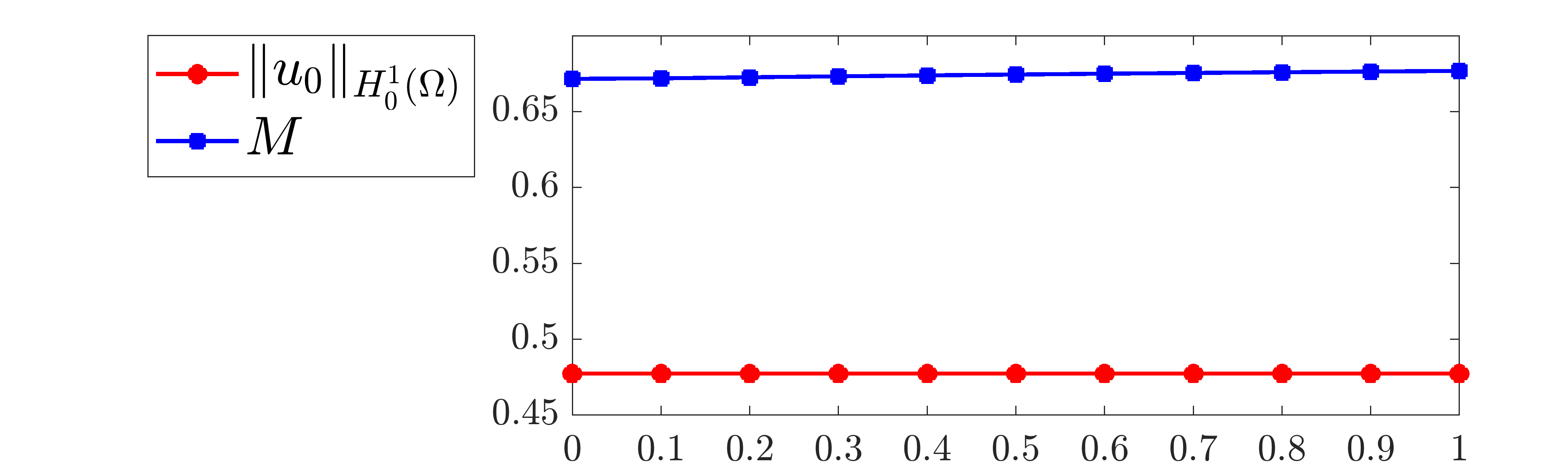}
	\caption{Fulfillment of the decay condition \eqref{eq:u0cond}.}\label{fig:initNorm}
\end{SCfigure}

Hence, according to Theorem \ref{thm:Theo3}, the solution to \eqref{eq:mainnum_eq} is expected to decay in $H_0^1(\Omega)$-norm at the rate \eqref{eq:decay1}, i.e.
\begin{align*}
	\|u_h^n\|_{H_0^1(\Omega)}\leq \left(\frac{4}{\mathrm{A}(p_1-2)t_n+4\left\Vert u_0\right\Vert_{H_0^1(\Omega)}^{2-p_1}}\right)^{\frac{1}{p_1-2}} =: \delta.
\end{align*} 
This is precisely what we can observe in Table \ref{tab:2} and Figures \ref{fig:1}, \ref{fig:2}.

$\newline$
\begin{table}[h!]
	\centering 
	\begin{tabular}{|c|c|c|c|c|c|c|}
		\hline $t_n$ & 0 & 0.1 & 0.2 & 0.3 & 0.4 & 0.5
		\\
		\hline $\|u_h^n\|_{H_0^1(\Omega)}$ & 0.477344 & 0.43491 & 0.420828 & 0.407467 & 0.394777 & 0.382713
		\\
		\hline $\delta$ & 0.477344 & 0.475403 & 0.473474 & 0.471557 & 0.469651 & 0.467757
		\\
		\hline $t_n$ & 0.6 & 0.7 & 0.8 & 0.9 & 1 &
		\\
		\hline $\|u_h^n\|_{H_0^1(\Omega)}$ & 0.371233 & 0.360298 & 0.349873 & 0.339927 & 0.330429 &	
		\\
		\hline $\delta$ & 0.465874 & 0.464003 & 0.462143 & 0.460294 & 0.458456 &
		\\
		\hline 
	\end{tabular}\caption{$H_0^1(\Omega)$-norm decay of the solution for $t\in [0,1]$.}\label{tab:2}
\end{table}

\begin{SCfigure}[0.5][h!]
\centering
	\includegraphics[scale=0.5]{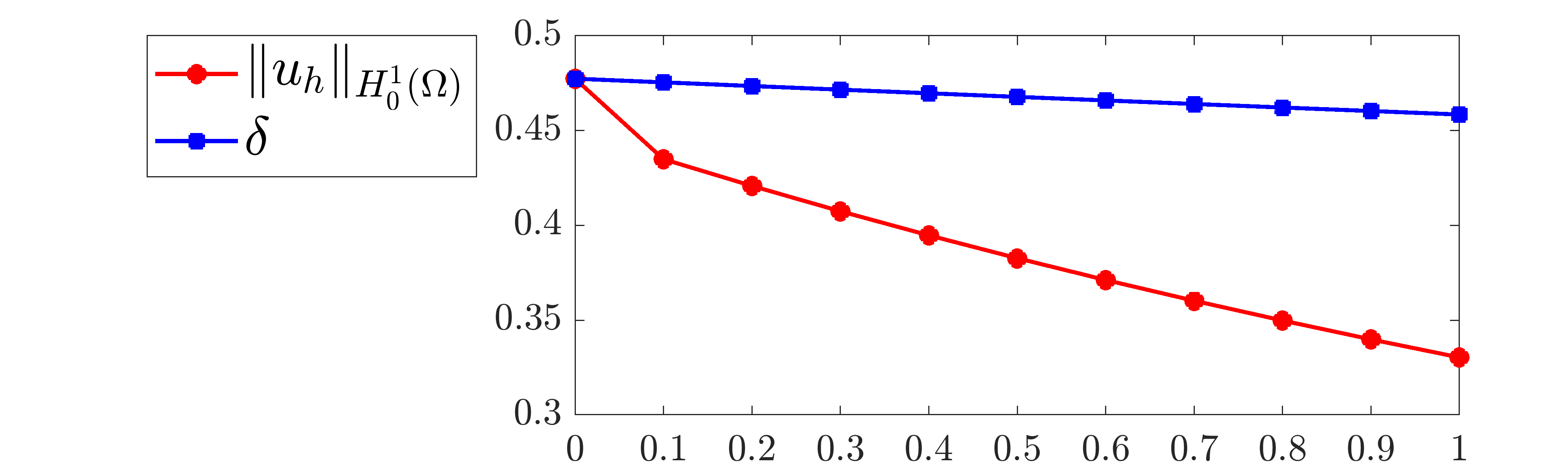}
	\caption{$H_0^1(\Omega)$-norm decay of the solution for $t\in [0,1]$.}\label{fig:1}
\end{SCfigure}

\begin{figure}[h]
	\centering
	\includegraphics[scale=0.3]{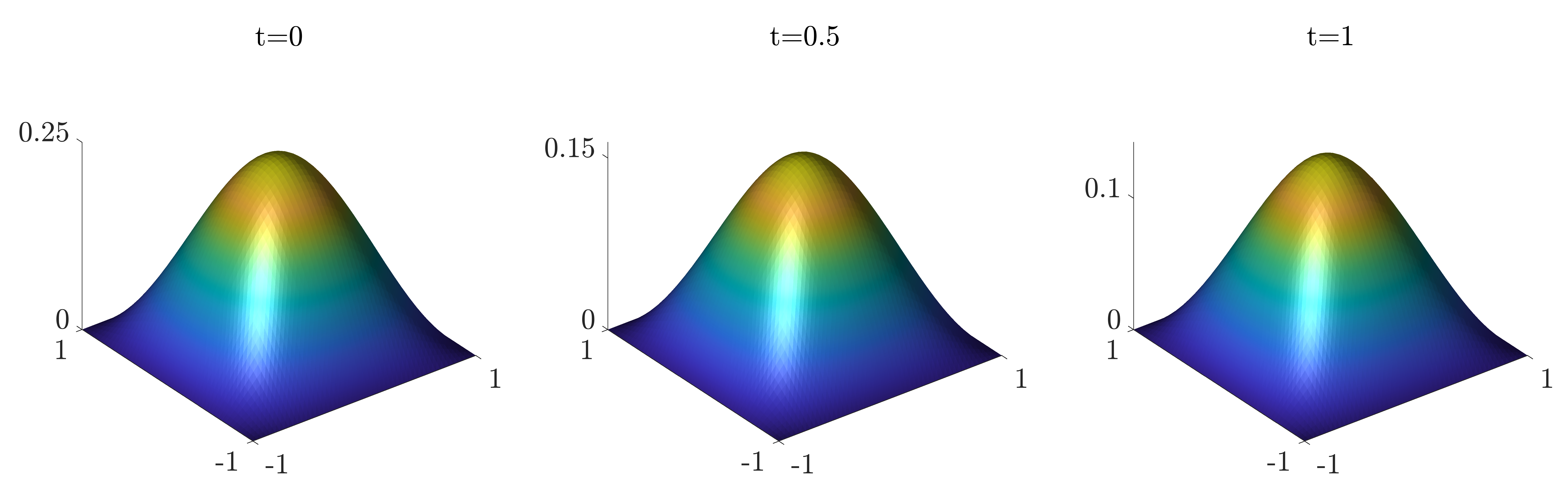}
	\caption{Time evolution of the solution of \eqref{eq:mainnum_eq} for $t\in [0,1]$.}\label{fig:2}
\end{figure}

All this confirms numerically the validity of our theoretical results presented in Section \ref{sec:4}.

\end{document}